\providecommand{\U}[1]{\protect\rule{.1in}{.1in}}
\newtheorem{theorem}{Theorem}
\newtheorem{definition}[theorem]{Definition}
\newtheorem{example}[theorem]{Example}
\newtheorem{lemma}[theorem]{Lemma}
\newtheorem{proposition}[theorem]{Proposition}
\newtheorem{question}{Question} \topmargin-2cm
\newenvironment{proof}[1][Proof]{\noindent\textbf{#1.} }{\ \rule{0.5em}{0.5em}}
\begin{document}

\title{Bandlimited Spaces on Some $2$-step Nilpotent Lie Groups With One Parseval Frame Generator}
\author{Vignon Oussa}

\maketitle 
\begin{center}
Saint-Louis University
\end{center}
   \begin{abstract}
Let $N$ be a step two connected and simply connected  non commutative nilpotent Lie group which is square-integrable modulo the center. Let $Z$ be the center of $N$. Assume that $N=P\rtimes M$  such that $P$, and $M$ are simply connected, connected abelian Lie groups, $M$ acts non-trivially on $P$ by automorphisms and $\dim P/Z=\dim M$. We study bandlimited subspaces of $L^2(N)$ which admit Parseval frames generated by discrete translates of a single function. We also find characteristics of bandlimited subspaces of $L^2(N)$ which do not admit a single Parseval frame. We also provide some conditions under which continuous wavelets transforms related to the left regular representation admit discretization, by some discrete set $\Gamma\subset N$. Finally, we show some explicit examples in the last section.
\end{abstract}

\section{Introduction}
In the classical case of $L^2(\mathbb{R})$, closed subspaces where Fourier transforms are supported on a bounded interval enjoy some very nice properties. Such subspaces are called band-limited subspaces of $L^2(\mathbb{R})$. Among other things, these subspaces are stable under the regular representation of the real line; for each class of functions belonging to these spaces there exists an infinitely smooth representative, and more importantly, these spaces admit frames and bases generated by discrete translations of a single function. A classical example is the Paley-Wiener space defined as the space of functions in $L^2(\mathbb{R})$ with Fourier transform supported within the interval  $[-0.5,0.5]$. For such space, the set of integer translates of the sinc function $\frac{\sin(\pi x)}{\pi x}$ forms a Parseval frame, and even better, it is an orthonormal basis for the space (see \cite{chris}). These notions are easily generalized to $L^2(\mathbb{R}^d)$. It is then natural to investigate whether similar results are possible when $\mathbb{R}$ is replaced with a connected, simply connected non commutative Lie group $N$. Since the closest Lie groups to $\mathbb{R}^n$ are simply connected, connected step two nilpotent Lie groups, this class of groups is a natural one to consider. For example, in \cite{than}, Thangavelu has studied Paley Wiener theorems for step two nilpotent Lie group.
In the monograph $\cite {Fuhr cont}$, Hartmut F\"uhr has studied sampling theorems for the Heisenberg group, which is the simplest non commutative nilpotent Lie group of step two. Using various theorems related to Gabor frames, he obtained some nice conditions on how to construct Parseval frames invariant under the left regular representation of the Heisenberg group restricted to some lattice subgroups (chapter 6 in \cite {Fuhr cont}). His results, even though very precise and explicit, were obtained in the restricted case of the Heisenberg Lie group. In this paper, we study subspaces of bounded spectrum of $L^2(N)$ where $N$ belongs to a class of connected, simply connected nilpotent Lie groups satisfying the following conditions. $N$ is a $2$-step nilpotent Lie group which is square-integrable modulo the center. We also assume that $N=P\rtimes M$  such that $P$ and $M$ are simply connected, connected commutative Lie groups such that $P$ is a maximal normal subgroup of $N$ which is commutative, and is containing the center of the group. Furthermore, $M$ acts non-trivially on $P$, and if $\mathrm{Z}$ denotes the center of $N$, then $\dim M = \dim P/Z$. On the Lie algebra level, there exist commutative Lie subalgebras $\mathfrak{m}$, and $\mathfrak{m}_1$ such that $\mathfrak{n}=\mathfrak{m}\oplus\mathfrak{m}_1\oplus\mathfrak{z}$, $\mathfrak{m}$ is the Lie algebra of the subgroup $M$, $\mathfrak{m}_1\oplus\mathfrak{z}$ is the Lie algebra of the maximal normal subgroup $P$, $\dim\mathfrak{m}=\dim\mathfrak{m}_1$, $\mathfrak{z}$ is the center of $\mathfrak{n}$, and finally the adjoint action of $\mathfrak{m}$ on $\mathfrak{n}$ is non-trivial. We answer the following questions.
\begin{question} \label{Q1}
Let $L$ be the left regular representation acting on $L^2(N)$, and let $\mathcal{H}$ be a closed band-limited subspace of $L^2(N)$, how do we pick a discrete subset $\Gamma \subset N$, and a function $\phi$ in $\mathcal{H}$ such that the system $L(\Gamma)\phi$ forms either a Parseval frame or an orthonormal basis in $\mathcal{H}$? 
\end{question}
\begin{question}\label{Q2} What are some necessary conditions for the existence of a single Parseval frame generator for any arbitrary band-limited subspace of $L^2(N)$.\end{question}
\begin{question}\label{Q3} What are some characteristics of band-limited subspaces of $L^2(N)$ which admit discretizable continuous wavelets. What are some characteristics of the quasi-lattices allowing the discretizations?
\end{question}
In order to provide answers to these questions, we relax the definition of lattice subgroups, by considering a broader class of discrete sets which we call quasi-lattices. It turns out that these quasi-lattices must satisfy some specific density conditions which we provide in this paper. We show how to use systems of multivariate Gabor frames to obtain Parseval frames for band-limited subspaces of $L^2(N)$ with bounded multiplicities. 

In the first section, we start the paper by reviewing some background materials. In the second section, we prove our results, and finally we compute some explicit examples in the last section. 
Among several results obtained in this paper, the theorem below is the most important one. 
\begin{theorem}
Let $N$ be a simply connected, connected step two nilpotent Lie group with center $Z$ of the form $N=P\rtimes M$ such that $P$ is a maximal commutative normal subgroup of $N$, where  $M$ is a commutative subgroup, and $\dim(P/Z)=\dim(M)$. Let $\mathcal{H}$ be a multiplicity-free subspace of $L^2(N)$ with bounded spectrum. There exists a quasi-lattice $\Gamma\subset N$ and a function $\phi$ such that the system $\{L(\gamma)\phi:\gamma\in\Gamma\}$ forms a Parseval frame in $\mathcal{H}$.
\end{theorem}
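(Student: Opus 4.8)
The plan is to pass to the Plancherel picture for $N$, flatten the bandlimited multiplicity-free subspace $\mathcal{H}$ into an $L^2$-space over (spectrum)$\,\times\,\mathbb{R}^n$, recognise the restricted left regular representation there as a measurable family of Gabor systems twisted by central characters, and then assemble $\Gamma$ out of three ordinary lattices, one in each of $M$, $M_1$, $Z$, whose densities are dictated by the (bounded) spectrum.

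\textbf{Step 1 (Schr\"odinger model).} Since $N$ is square-integrable modulo the center, Moore--Wolf theory gives a conull Zariski-open set $\Sigma\subseteq\mathfrak{z}^{\ast}$ and, for $\lambda\in\Sigma$, an irreducible $\pi_{\lambda}$ which, because $N=P\rtimes M$, may be realised as $\mathrm{Ind}_{P}^{N}\chi_{\lambda}$ on $L^{2}(M)\cong L^{2}(\mathbb{R}^{n})$ with $n=\dim M=\dim(P/Z)$, so that $L^{2}(N)\cong\int_{\Sigma}^{\oplus}\mathrm{HS}(L^{2}(\mathbb{R}^{n}))\,|\mathbf{Pf}(\lambda)|\,d\lambda$ and $L$ acts on the first tensor factor by $\pi_{\lambda}$. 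Writing an element of $N$ as $(a,b,c)\in M\times M_{1}\times Z$, the translation $L(a,b,c)$ acts in the fibre over $\lambda$ by $f(t)\mapsto e^{2\pi i\langle\lambda,c\rangle}\,e^{2\pi i\langle J_{\lambda}b,\,t\rangle}\,f(t-a)$, where $J_{\lambda}\colon\mathfrak{m}_{1}\to\mathfrak{m}^{\ast}$ is the linear map $b\mapsto\lambda([b,\,\cdot\,])$. The hypothesis $\dim\mathfrak{m}_{1}=\dim\mathfrak{m}$ together with square-integrability modulo the center forces $J_{\lambda}$ to be invertible for $\lambda\in\Sigma$, and $|\det J_{\lambda}|$ equals $|\mathbf{Pf}(\lambda)|$ up to a nonzero constant.

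\textbf{Step 2 (flatten $\mathcal{H}$ and choose $\Gamma$).} Because $\mathcal{H}$ is $L$-invariant, multiplicity-free and bandlimited, the Plancherel decomposition restricts to $\mathcal{H}\cong\int_{E}^{\oplus}L^{2}(\mathbb{R}^{n})\,d\nu(\lambda)$ for a bounded set $E\subseteq\Sigma$ and a measure $\nu$ equivalent to Lebesgue on $E$, with $L$ acting fibrewise by $\pi_{\lambda}$; absorbing the density by a fibrewise scalar unitary we may take $\nu$ to be Lebesgue and view $\mathcal{H}$ as $L^{2}(E\times\mathbb{R}^{n})$ with $L$ acting by the twisted-translation formula of Step 1. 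Now take a lattice $\Lambda_{Z}\subset Z$ fine enough that $E$, after a translation, lies inside a fundamental domain of the dual lattice $\Lambda_{Z}^{\ast}$, and lattices $\Lambda_{M}\subset M$, $\Lambda_{M_{1}}\subset M_{1}$ fine enough that $|\det J_{\lambda}|\cdot\mathrm{vol}(M/\Lambda_{M})\cdot\mathrm{vol}(M_{1}/\Lambda_{M_{1}})\le 1$ for all $\lambda\in E$ --- this is possible precisely because $|\mathbf{Pf}|$ is bounded on the bounded set $E$. Put $\Gamma:=\exp(\Lambda_{M}+\Lambda_{M_{1}}+\Lambda_{Z})$; owing to the $\tfrac{1}{2}[\,\cdot\,,\,\cdot\,]$ term in Baker--Campbell--Hausdorff this is in general not a subgroup of $N$ but it is a quasi-lattice, and the two density conditions above are exactly what the rest of the argument consumes.

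\textbf{Step 3 (generator and frame identity), and the main obstacle.} For each $\lambda\in E$ select, measurably in $\lambda$, a Parseval Gabor window $g_{\lambda}\in L^{2}(\mathbb{R}^{n})$ for the system $\{e^{2\pi i\langle J_{\lambda}b,t\rangle}g_{\lambda}(t-a):a\in\Lambda_{M},\ b\in\Lambda_{M_{1}}\}$; such windows exist below the critical density established in Step 2 by the standard theory of lattice Gabor frames, can be chosen (e.g.\ a dilated box indicator, or an adapted Gaussian) depending measurably on $\lambda$, and satisfy $\|g_{\lambda}\|^{2}=|\det J_{\lambda}|\,\mathrm{vol}(M/\Lambda_{M})\,\mathrm{vol}(M_{1}/\Lambda_{M_{1}})\le1$. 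Let $\phi\in\mathcal{H}$ correspond to $(\lambda,t)\mapsto\mathbf{1}_{E}(\lambda)\,g_{\lambda}(t)$, so $\|\phi\|^{2}\le|E|<\infty$. For $F\in\mathcal{H}=L^{2}(E\times\mathbb{R}^{n})$ one has $\langle F,L(a,b,c)\phi\rangle=\int_{E}e^{-2\pi i\langle\lambda,c\rangle}\,\langle F(\lambda,\cdot),e^{2\pi i\langle J_{\lambda}b,\cdot\rangle}g_{\lambda}(\cdot-a)\rangle\,d\lambda$; summing the square modulus over $c\in\Lambda_{Z}$ (Parseval frame property of the central exponentials on $L^{2}(E)$, since $E$ lies in a fundamental domain of $\Lambda_{Z}^{\ast}$) gives $\int_{E}|\langle F(\lambda,\cdot),e^{2\pi i\langle J_{\lambda}b,\cdot\rangle}g_{\lambda}(\cdot-a)\rangle|^{2}\,d\lambda$, and summing that over $(a,b)\in\Lambda_{M}\times\Lambda_{M_{1}}$ and interchanging the sum with the $\lambda$-integral (Tonelli) yields $\int_{E}\|F(\lambda,\cdot)\|_{L^{2}(\mathbb{R}^{n})}^{2}\,d\lambda=\|F\|^{2}$ by the fibrewise Parseval Gabor frames, so $\{L(\gamma)\phi:\gamma\in\Gamma\}$ is a Parseval frame for $\mathcal{H}$. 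The Plancherel bookkeeping, the flattening, and the Tonelli interchange are routine; the delicate points are (i) pinning down the exact twisted-translation formula in the induced model --- in particular identifying the modulation lattice as $J_{\lambda}\Lambda_{M_{1}}$ and the critical density as $|\det J_{\lambda}|$, proportional to $|\mathbf{Pf}(\lambda)|$ --- and (ii) producing a \emph{measurable} field of Parseval Gabor windows $g_{\lambda}$ with norm uniformly bounded over the bounded spectrum. Point (ii), where band-limitedness of $\mathcal{H}$ and the sub-critical density of $\Gamma$ do the real work, is the step I expect to demand the most care.
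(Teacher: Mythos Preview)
Your argument is correct and is essentially the paper's own proof: pass to the Plancherel side, recognise $\pi_\lambda$ restricted to the reduced quasi-lattice as a separable Gabor system on $L^2(\mathbb{R}^d)$ with lattice volume $|\det J_\lambda|\cdot\mathrm{vol}(M/\Lambda_M)\cdot\mathrm{vol}(M_1/\Lambda_{M_1})$, use boundedness of the spectrum to force this below $1$ uniformly, pick fibrewise Parseval Gabor windows $g_\lambda$, and sum first over the central lattice and then over the time--frequency lattice. The only slip is a missing normalisation: the raw exponentials $\{e^{2\pi i\langle c,\cdot\rangle}:c\in\Lambda_Z\}$ form a tight frame for $L^2(E)$ with bound equal to the volume of a fundamental domain of $\Lambda_Z^{\ast}$, not $1$, so your $\phi$ must carry the extra scalar $\mathrm{vol}(\mathfrak{z}^{\ast}/\Lambda_Z^{\ast})^{-1/2}$ --- exactly the paper's factor $\bigl(\prod_k a_k\bigr)^{-1/2}$ in its generator $\widehat{\eta}(\lambda)=g(\lambda)\otimes\mathbf{u}\,\big/\bigl(\prod_k\sqrt{a_k}\,\sqrt{|\det B(\lambda)|}\,\bigr)$, the $\sqrt{|\det B(\lambda)|}$ there being what you already absorbed when passing to Lebesgue measure on $E$.
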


\section{Generalities and notations}

\begin{definition}
Given a countable sequence $\left\{  f_{i}\right\}  _{i\in I}$ of functions in
an separable Hilbert space $\mathcal{H},$ we say $\left\{
f_{i}\right\}  _{i\in I}$ forms a \textbf{frame} if and only if there exist strictly positive real numbers $A,B$ such that for any function $f\in
\mathcal{H}$
\[
A\left\Vert f\right\Vert ^{2}\leq\sum_{i\in I}\left\vert \left\langle
f,f_{i}\right\rangle \right\vert ^{2}\leq B\left\Vert f\right\Vert ^{2}.
\]
In the case where $A=B$, the sequence of functions $\left\{  f_{i}\right\}  _{i\in I}$ forms a \textbf{tight frame}, and if $A=B=1$, $\left\{  f_{i}\right\}  _{i\in I}$ is called
a \textbf{Parseval frame}. Also, if $\left\{  f_{i}\right\}  _{i\in I}$ is a Parseval frame such that for all $i\in I,\left\Vert
f_{i}\right\Vert =1$ then $\left\{  f_{i}\right\}  _{i\in I}$ is an orthonormal basis for $\mathcal{H}$.
\end{definition}

\begin{definition}
A lattice $\Lambda$ in $\mathbb{R}^{2d}$ is a discrete subgroup of the additive group $\mathbb{R}^{2d}$. In other words, $\Lambda=A\mathbb{Z}^{2d}$ for some matrix $A$. We say $\Lambda$ is a full rank lattice if $A$ is nonsingular, and we denote the dual of $\Lambda$ by $\Lambda^{\top}=A^{-1tr}\Lambda$ ($A^{tr}$ denotes the transpose of $A$). We say a lattice is separable
if $\Lambda=A\mathbb{Z}^{d}\times B\mathbb{Z}^{d}.$ A \textbf{fundamental domain} $D$ for a lattice in $\mathbb{R}^{d}$ is a measurable set such that the followings hold
\end{definition}
\begin{enumerate}
\item $(D+\lambda)\cap (D+\lambda^{\prime})\neq \emptyset$ for distinct $\lambda,$ $\lambda^{\prime}$ in $\Lambda.$
\item $\mathbb{R}^{d}={\displaystyle\bigcup\limits_{\lambda\in\Lambda}}\left(  D+\lambda\right).$ We say $D$ is a packing set for $\Lambda$ if
$\sum_{\lambda}\chi_{D}\left(  x-\lambda\right)  \leq1$ for almost every $x\in\mathbb{R}^d.$
\item Let $\Lambda=A\mathbb{Z}^{d}\times B\mathbb{Z}^{d}$ be a full rank lattice in $\mathbb{R}^{2d}$ and $g\in L^{2}\left(\mathbb{R}^{d}\right)$. The family of functions in $L^{2}\left(\mathbb{R}^{d}\right)$, \begin{equation}
\label{Gabor}
\mathcal{G}\left(  g,A\mathbb{Z}^{d}\times B\mathbb{Z}^{d}\right)=\left\{  e^{2\pi i\left\langle k,x\right\rangle }g\left(
x-n\right)  :k\in B\mathbb{Z}^{d},n\in A\mathbb{Z}^{d}\right\}\end{equation} is called a \textbf{Gabor system}.
\end{enumerate}
\begin{definition}
Let $m$ be the Lebesgue measure on $\mathbb{R}^d$, and consider a full rank lattice $\Lambda=A\mathbb{Z}^{d}$ inside $\mathbb{R}^d$.
\begin{enumerate}
\item The \textbf{volume} of $\Lambda$ is defined as $vol\left(  \Lambda\right)  = m\left(\mathbb{R}^{d}/\Lambda\right)  =\left\vert \det A\right\vert .$
\item The \textbf{density} of $\Lambda$ is defined as $d\left(\Lambda\right)=\dfrac{1}{\left\vert \det A\right\vert }.$
\end{enumerate}
\end{definition}

\begin{lemma}(\textbf{Density Condition})\label{density}
Given a separable full rank lattice $\Lambda=A\mathbb{Z}^{d}\times B\mathbb{Z}^{d}$ in $\mathbb{R}^{2d}$. The followings are equivalent
\end{lemma}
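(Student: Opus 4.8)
The plan is to prove the asserted equivalences by a short cycle of implications, keeping the three ingredients cleanly separated: the numerical volume inequality $\mathrm{vol}(\Lambda)=\lvert\det A\rvert\,\lvert\det B\rvert\le 1$; the purely measure-theoretic statement about the two lattices $A\mathbb{Z}^d$ and $(B^{tr})^{-1}\mathbb{Z}^d$; and the analytic statement that $\mathcal{G}(g,A\mathbb{Z}^d\times B\mathbb{Z}^d)$ is a Parseval frame for $L^2(\mathbb{R}^d)$ for some $g\in L^2(\mathbb{R}^d)$.

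For the implication ``$\mathrm{vol}(\Lambda)\le 1$ $\Rightarrow$ a Parseval frame generator exists'' I would use an explicit ``painless''-type construction. The dual lattice $(B\mathbb{Z}^d)^{\top}=(B^{tr})^{-1}\mathbb{Z}^d$ has covolume $\lvert\det B\rvert^{-1}$. First produce a measurable set $S\subset\mathbb{R}^d$ that is simultaneously a fundamental domain for the translation lattice $A\mathbb{Z}^d$ and a packing set for $(B^{tr})^{-1}\mathbb{Z}^d$; such an $S$ exists exactly when $\lvert\det A\rvert\le\lvert\det B\rvert^{-1}$. Put $g=\lvert\det B\rvert^{1/2}\chi_{S}$. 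For $f\in L^2(\mathbb{R}^d)$ and each $n\in\mathbb{Z}^d$, periodizing $f\cdot\overline{g(\cdot-An)}$ over $(B^{tr})^{-1}\mathbb{Z}^d$ is legitimate because $S+An$ still packs under that lattice, and the exponentials $\{e^{2\pi i\langle Bk,\cdot\rangle}\}_{k\in\mathbb{Z}^d}$ act as a tight frame with constant $\lvert\det B\rvert^{-1}$ on $L^2(S+An)$; hence
\[
\sum_{n\in\mathbb{Z}^d}\sum_{k\in\mathbb{Z}^d}\bigl\lvert\langle f,\,e^{2\pi i\langle Bk,\cdot\rangle}g(\cdot-An)\rangle\bigr\rvert^{2}
=\sum_{n\in\mathbb{Z}^d}\int_{S+An}\lvert f(x)\rvert^{2}\,dx=\lVert f\rVert^{2},
\]
the last equality because $\{S+An\}_{n\in\mathbb{Z}^d}$ tiles $\mathbb{R}^d$. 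Thus $\mathcal{G}(g,A\mathbb{Z}^d\times B\mathbb{Z}^d)$ is a Parseval frame. (If the statement also characterizes when an orthonormal basis exists, this is the critical case $\mathrm{vol}(\Lambda)=1$: then $\lvert S\rvert=\lvert\det B\rvert^{-1}$ forces $S$ to tile under $(B^{tr})^{-1}\mathbb{Z}^d$ as well, so every $e^{2\pi i\langle Bk,\cdot\rangle}g(\cdot-An)$ has norm one.)

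For the converse, ``a (Parseval) frame generator exists $\Rightarrow\mathrm{vol}(\Lambda)\le1$'', I would invoke the density theorem for Gabor systems (Rieffel; Ramanathan--Steger; Baggett): a Gabor system that is complete with a finite upper frame bound must have time--frequency density at least one. For the Parseval case there is a shortcut avoiding the full density theorem: for any Parseval Gabor frame $\mathcal{G}(g,\Lambda)$ one has $\lVert g\rVert^{2}=\mathrm{vol}(\Lambda)$, while every element of a Parseval frame has norm at most one and $\lVert e^{2\pi i\langle Bk,\cdot\rangle}g(\cdot-An)\rVert=\lVert g\rVert$, so $\mathrm{vol}(\Lambda)=\lVert g\rVert^{2}\le1$. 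Finally, the equivalence between $\mathrm{vol}(\Lambda)\le1$ and the existence of the set $S$ used above: necessity is a volume count ($\lvert S\rvert=\lvert\det A\rvert$ since $S$ tiles under $A\mathbb{Z}^d$, and $\lvert S\rvert\le\lvert\det B\rvert^{-1}$ since $S$ packs under $(B^{tr})^{-1}\mathbb{Z}^d$), and sufficiency is a ``nested fundamental domain'' construction: partition a fundamental domain of $A\mathbb{Z}^d$ into countably many small pieces and greedily translate each piece by an element of $(B^{tr})^{-1}\mathbb{Z}^d$ into the still-unfilled part of a fixed fundamental domain of $(B^{tr})^{-1}\mathbb{Z}^d$, the volume inequality guaranteeing room at every step; alternatively, approximate $B$ by a rationally commensurable matrix, settle the commensurable case by Hall's marriage theorem on a finite bipartite graph, and pass to the limit.

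I expect the measure-theoretic sufficiency step --- constructing $S$ together with the verification of measurability and of the ``room at every step'' claim --- to be the most delicate part of the argument. The density theorem, although deep in general, enters here only in its easy Parseval-frame incarnation, and the painless frame estimate is routine once $S$ is in hand.
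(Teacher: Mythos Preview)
The paper does not prove this lemma at all: its ``proof'' is a one-line citation to Theorem~3.3 of Han--Wang. Your proposal, by contrast, sketches an actual argument, and the route you take---build a measurable set $S$ that is simultaneously a fundamental domain for $A\mathbb{Z}^d$ and a packing set for the dual lattice $(B^{tr})^{-1}\mathbb{Z}^d$, then use $g=\lvert\det B\rvert^{1/2}\chi_S$ as a painless Parseval generator---is exactly the Han--Wang method. So you have essentially reconstructed the cited reference rather than found a different proof; the paper and your proposal agree in strategy, only the paper outsources the work.

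One small point to tighten. Condition~(3) in the lemma is mere completeness, with no upper frame bound assumed. The density theorem you state (``complete with a finite upper frame bound implies density at least one'') is the Ramanathan--Steger/Baggett formulation and is not literally strong enough for $(3)\Rightarrow(2)$. For completeness alone over a lattice you need Rieffel's von Neumann algebra argument, which you do list among your citations; just make sure that when you close the cycle you invoke Rieffel specifically for this step rather than the frame-bound version. The shortcut $\lVert g\rVert^2=\mathrm{vol}(\Lambda)\le 1$ handles $(1)\Rightarrow(2)$ but does nothing for $(3)\Rightarrow(2)$.
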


\begin{enumerate}
\item There exits $g \in L^2(\mathbb{R}^d)$ such that $\mathcal{G}\left(g, \:A\mathbb{Z}^{d}\times B\mathbb{Z}^{d}\right)$ is a Parseval frame in $L^{2}\left(\mathbb{R}^{d}\right).$

\item $vol\left(\Lambda\right)=\left\vert \det A\det B\right\vert \leq1.$

\item There exists $g\in L^{2}\left(\mathbb{R}^{d}\right)$ such that $\mathcal{G}\left(g, A\mathbb{Z}^{d}\times B\mathbb{Z}^{d}\right)$ is complete in $L^{2}\left(\mathbb{R}^{d}\right)$
\end{enumerate}

\begin{proof}
See Theorem 3.3 in \cite{Han Yang Wang}.
\end{proof}
\begin{lemma}
\label{ONB} Let $\Lambda$ be a full rank lattice in $\mathbb{R}^{2d}$. There exists $g\in L^{2}\left(\mathbb{R}^{d}\right)  $ such that $\mathcal{G}\left(  g,\Lambda\right)$ is an orthonormal basis if and only if $vol\left(\Lambda\right)=1.$ Also, if $\mathcal{G}\left(  g,\Lambda\right)$ is a Parseval frame for $L^2(\mathbb{R}^d)$, then $\|g\|^2 = vol(\Lambda).$
\end{lemma}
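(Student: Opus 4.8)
The plan is to deduce the nontrivial direction of the first assertion from the norm identity in the second assertion, so I would establish the norm identity first. Assume $\mathcal{G}(g,\Lambda)$ is a Parseval frame and let $S$ be its Gabor frame operator, so that $S=\mathrm{Id}$; the content of the claim $\|g\|^{2}=\mathrm{vol}(\Lambda)$ is then purely structural, and I would read it off from the Walnut (equivalently, Janssen) representation of $S$.

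First I would try to reduce to a separable lattice: applying the metaplectic operator $\mu(P)$ attached to a symplectic matrix $P$ sends $\mathcal{G}(g,\Lambda)$ to $\mathcal{G}(\mu(P)g,P\Lambda)$, again a Parseval frame, and preserves both $\|g\|$ and $\mathrm{vol}(\Lambda)$ (since $|\det P|=1$); this is straightforward when $d=1$. So suppose $\Lambda=A\mathbb{Z}^{d}\times B\mathbb{Z}^{d}$, and, after the unitary dilation $f\mapsto|\det A|^{1/2}f(A\,\cdot)$, that $\Lambda=\mathbb{Z}^{d}\times B\mathbb{Z}^{d}$ with $|\det B|=\mathrm{vol}(\Lambda)$. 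Writing $\widetilde{B}=(B^{\top})^{-1}$, the frame operator then has the Walnut form
\[
Sf(x)=\frac{1}{|\det B|}\sum_{k\in\mathbb{Z}^{d}}\Big(\sum_{n\in\mathbb{Z}^{d}}\overline{g(x-n)}\,g(x-n-\widetilde{B}k)\Big)f(x-\widetilde{B}k).
\]
Imposing $S=\mathrm{Id}$ and testing against functions $f$ supported in a fundamental domain of $\widetilde{B}\mathbb{Z}^{d}$ isolates the $k=0$ summand and forces $G_{0}(x):=\sum_{n\in\mathbb{Z}^{d}}|g(x-n)|^{2}=|\det B|$ for almost every $x$. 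Since $G_{0}$ is $\mathbb{Z}^{d}$-periodic, integrating over $[0,1)^{d}$ gives $\|g\|^{2}=\int_{[0,1)^{d}}G_{0}\,dx=|\det B|=\mathrm{vol}(\Lambda)$.

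Granting the identity, the first assertion is immediate. If $\mathcal{G}(g,\Lambda)$ is an orthonormal basis then it is in particular a Parseval frame, so $\|g\|^{2}=\mathrm{vol}(\Lambda)$; but every element of an orthonormal basis is a unit vector, and the element indexed by $0\in\Lambda$ is $g$ itself, so $\|g\|=1$ and hence $\mathrm{vol}(\Lambda)=1$. Conversely, if $\mathrm{vol}(\Lambda)=1$ I would exhibit an orthonormal basis: for a separable lattice $\Lambda=A\mathbb{Z}^{d}\times B\mathbb{Z}^{d}$ with $|\det A\det B|=1$ one checks directly that the normalized indicator of a fundamental domain for $A\mathbb{Z}^{d}$ works (the higher-dimensional analogue of $\mathcal{G}(\chi_{[0,1)},\mathbb{Z}\times\mathbb{Z})$), and the general case follows either from the metaplectic reduction or from the lattice-tiling results of Han and Wang.

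I expect the genuine obstacle to be the reduction to separable lattices when $d>1$: a full-rank lattice in $\mathbb{R}^{2d}$ need not be symplectically (or even linearly, modulo integer basis changes) equivalent to a separable one, so one must either supply a normal-form argument for lattices under $\mathrm{Sp}(2d,\mathbb{R})$ or else run the Walnut/Janssen computation above directly for a general lattice $\Lambda=L\mathbb{Z}^{2d}$, carrying the adjoint lattice $\Lambda^{\circ}$ through the bookkeeping. Everything else — the norm identity for separable lattices and the orthonormal-basis equivalence — is elementary once that reduction (or a substitute for it) is in hand.
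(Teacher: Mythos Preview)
The paper does not prove this lemma: its ``proof'' is the single sentence ``See \cite{Han Yang Wang}, Theorem 1.3 and Lemma 3.2.'' So you are not being asked to reproduce an argument from the paper but rather one from the cited reference, and your proposal goes well beyond what the paper itself supplies.

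Your approach is sound for separable lattices: the Walnut representation computation is correct, the extraction of the $k=0$ term giving $\sum_{n}|g(x-n)|^{2}\equiv|\det B|$ is the standard move, and the deduction of both directions of the orthonormal-basis equivalence from the norm identity is clean. The metaplectic reduction is also fine in dimension $d=1$, since $\mathrm{Sp}(2,\mathbb{R})=\mathrm{SL}(2,\mathbb{R})$ acts transitively on lattices of a given covolume.

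You have correctly identified the one genuine gap: for $d>1$ a general full-rank lattice in $\mathbb{R}^{2d}$ need not be symplectically equivalent to a separable one, so the metaplectic reduction does not go through as stated. The fix you suggest --- running the Janssen representation directly for $\Lambda=L\mathbb{Z}^{2d}$ with the adjoint lattice $\Lambda^{\circ}$ in place of $\widetilde{B}\mathbb{Z}^{d}$ --- is exactly what Han and Wang do, and it works without difficulty; the $0$-term of the Janssen expansion still yields $\|g\|^{2}=\mathrm{vol}(\Lambda)$. It is also worth noting that every lattice actually used later in this paper is separable (see Lemma~\ref{gaborsystem}), so for the paper's purposes your separable argument already suffices.
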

\begin{proof}
See \cite{Han Yang Wang}, Theorem 1.3 and Lemma 3.2.
\end{proof}

Next, we start by setting up some notations. We will refer the reader to \cite{ArnalCurrey} for a more thorough exposition on the following discussion. Let $\mathfrak{n}$ be a simply
connected, and connected nilpotent Lie algebra over $\mathbb{R}$ with corresponding Lie group $N=\exp\mathfrak{n}$. Let $\mathfrak{s}$ be a
subalgebra in $\mathfrak{n}$ and let $\lambda$ be a linear functional. We define
the subalgebra $\mathfrak{s}^{\lambda}=\left\{  Z\in\mathfrak{n:}\text{ }\lambda\left[  Z,X\right]
=0\text{ for every }X\in\mathfrak{s}\right\}$
and $
\mathfrak{s}\left( \lambda\right)  =\mathfrak{s}^{\lambda}\cap\mathfrak{s}.$ The ideal
$\mathfrak{z}\left(  \mathfrak{n}\right)  $ denotes the center of the Lie
algebra of $\mathfrak{n,}$ and the coadjoint action on the dual of
$\mathfrak{n}$ is simply the dual of the adjoint action of $\exp\mathfrak{n}$
on $\mathfrak{n}$. Given any $X\in\mathfrak{n}$ the coadjoint action is
defined multiplicatively as follows: $\exp X\cdot \lambda\left(  Y\right)  =\lambda\left(
Ad_{\exp-X}Y\right)$. We fix for $\mathfrak{n}$ a fixed Jordan H\"older basis
$\left\{  Z_{i}\right\}  _{i=1}^{n}$ and we define the subalgebras:
$\mathfrak{n}_{k}=\mathbb{R}$-$\mathrm{span}\left\{  Z_{i}\right\}  _{i=1}^{k}.$ Given any linear
functional $\lambda\in\mathfrak{n}^{\ast},$ we construct the following skew-symmetric matrix:
\[
M\left(  \lambda\right)  =\left[ \lambda\left[  Z_{i},Z_{j}\right]  \right]  _{1\leq
i,j\leq n}.
\]
Notice that $\mathfrak{n}\left( \lambda\right)  =\mathrm{nullspace}\left(  M\left(
\lambda\right)  \right)  .$ Also, for each $\lambda\in$ $\mathfrak{n}^{\ast}$ there is a
corresponding set $\mathbf{e}\left(  \lambda\right)  \subset\left\{  1,2,\cdots
,n\right\}  $ of ``jump indices" defined by
\[
\mathbf{e}\left(  \lambda\right)  =\left\{  1\leq j\leq n:\mathfrak{n}_{k}\text{ not
in }\mathfrak{n}_{k-1}+\mathfrak{n}\left(  \lambda\right)  \right\}  .
\]
For each subset $\mathbf{e}$ inside $\left\{  1,2,\cdots,n\right\}  $ the set
$\Omega_{\mathbf{e}}=\left\{  \lambda\in\mathfrak{n}^{\ast}:\mathbf{e}\left(
\lambda\right)  =\mathbf{e}\right\}  $ is algebraic and $N$-invariant. The union of
all such non-empty layers defines the ``coarse stratification" of $\mathfrak{n}^{\ast}$.
It is known that all coajdoint orbits must have even dimension and there is a
total ordering $\prec$ on the coarse stratification for which the minimal
element is Zariski open and consists of orbits of maximal dimension. Let
$\mathbf{e}$ be the jump indices corresponding to the minimal layer. We
define the following matrix which will be very important for this paper
\begin{equation}
\label{matrixV}
V\left(  \lambda\right)  =\left[  \lambda\left[  Z_{i},Z_{j}\right]  \right]
_{i,j\in\mathbf{e}}.
\end{equation}
From now on, we fix the layer
\begin{equation}
\Omega=\left\{  \lambda\in\mathfrak{n}^{\ast}:\det M_{\mathbf{e}^{\prime}}\left(
\lambda\right)  =0\text{ for all }\mathbf{e}^{\prime}\prec\mathbf{e}\text{ and }\det
M_{\mathbf{e}}\left(  \lambda\right)  \neq0\text{ }\right\}  .\label{omega}%
\end{equation}
We define the polarization subalgebra associated with the linear functional
$\lambda$%
\[
\mathfrak{p}(\lambda)= \Sigma_{k=1}^{n} \left( \mathfrak{n}_{k}\left(  \lambda\right)  \cap\mathfrak{n}_{k}\right).
\]
$\mathfrak{p}(\lambda)$ is a maximal subalgebra subordinated to $\lambda$ such that $\lambda[\mathfrak{p}(\lambda),\mathfrak{p}(\lambda)]=0$ and $\chi_{\lambda}(\exp X)=e^{2\pi i \lambda(X)}$ defines a character on $\exp(\mathfrak{p}(\lambda))$. In general, we have for some positive integer $d\geq 1$
\begin{enumerate}
\item $\dim\left(\mathfrak{n}/\mathfrak{n}(\lambda)\right)=2d$.
\item $\mathfrak{p}(\lambda)$ is an ideal in $\mathfrak{n}$ and $\dim\mathfrak{p}(\lambda)=n-d.$
\item $\dim\left(\mathfrak{n}/\mathfrak{p}(\lambda)\right)=$ $d.$
\end{enumerate}
For each linear functional $\lambda$, let $\mathfrak{a}(\lambda)$ and $\mathfrak{b}(\lambda)$ be subalgebras of $\mathfrak{n}$ such that $\mathfrak{a}(\lambda)$ is isomorphic to $\mathfrak{n}/\mathfrak{p}(\lambda)$ and $\mathfrak{b}(\lambda)$ is isomorphic to $\mathfrak{p}(\lambda)/\mathfrak{n}(\lambda)$. We let 
\begin{align*}
\mathfrak{a}(\lambda) &  =\mathbb{R}\text{ -}\mathrm{span}\text{ }\left\{  X_{i}\left(  \lambda\right)  \right\}
_{i=1}^{d},\\
\mathfrak{b}(\lambda)  &  =\mathbb{R}\text{ -}\mathrm{span}\text{ }\left\{  Y_{i}\left(  \lambda\right)  \right\}
_{i=1}^{d},\\
\mathfrak{n}\left(  \lambda\right)   &  =\mathbb{R}\text{ -}\mathrm{span}\text{ }\left\{  Z_{i}\left(  \lambda\right)  \right\}
_{i=1}^{n-2d},
\end{align*}
and
$
\mathfrak{n}=\mathfrak{n}(\lambda)\oplus\mathfrak{b}(\lambda)\oplus\mathfrak{a}(\lambda) .
$

\begin{lemma}
Given $\lambda\in\Omega,$ if $\mathfrak{n}\left(\lambda\right)$ is a constant subalgebra for any
linear functional $\lambda$ then $\mathfrak{n}\left(\lambda\right)=\mathfrak{z}\left(
\mathfrak{n}\right).$
\end{lemma}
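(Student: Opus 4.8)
The plan is to establish the two inclusions $\mathfrak{z}(\mathfrak{n})\subseteq\mathfrak{n}(\lambda)$ and $\mathfrak{n}(\lambda)\subseteq\mathfrak{z}(\mathfrak{n})$ separately, where the first holds for every linear functional with no hypothesis at all, and the second is exactly where the constancy assumption is used. For the easy inclusion: if $Z\in\mathfrak{z}(\mathfrak{n})$ then $[Z,X]=0$ for all $X\in\mathfrak{n}$, so in particular $\lambda[Z,X]=0$ for every $X$, hence $Z$ lies in the nullspace of $M(\lambda)$, that is $Z\in\mathfrak{n}(\lambda)$. This requires nothing about $\Omega$ and shows $\mathfrak{z}(\mathfrak{n})\subseteq\mathfrak{n}(\lambda)$ for all $\lambda$.

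For the reverse inclusion, write $\mathfrak{r}$ for the common value of $\mathfrak{n}(\lambda)$ as $\lambda$ ranges over $\Omega$; this is precisely the content of the constancy hypothesis, and the point to emphasize is that one and the same subspace $\mathfrak{r}$ works for every $\lambda$ in the layer simultaneously. Fix $X\in\mathfrak{r}$ and $Y\in\mathfrak{n}$. Since $X\in\mathfrak{n}(\lambda)$ for every $\lambda\in\Omega$, we have $\lambda\left([X,Y]\right)=0$ for all such $\lambda$. Now consider the map $\lambda\mapsto\lambda\left([X,Y]\right)$; it is a linear functional on $\mathfrak{n}^{\ast}$ which vanishes on $\Omega$. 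Because $\Omega$ is a nonempty Zariski-open subset of the irreducible affine space $\mathfrak{n}^{\ast}\cong\mathbb{R}^{n}$, it is Zariski dense, so it cannot be contained in the proper linear subspace cut out by a nonzero linear functional; hence $\lambda\mapsto\lambda\left([X,Y]\right)$ is identically zero, which forces $[X,Y]=0$. As $Y\in\mathfrak{n}$ was arbitrary, $X\in\mathfrak{z}(\mathfrak{n})$, so $\mathfrak{r}\subseteq\mathfrak{z}(\mathfrak{n})$.

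Combining the two inclusions yields $\mathfrak{n}(\lambda)=\mathfrak{r}=\mathfrak{z}(\mathfrak{n})$ for each $\lambda\in\Omega$, as claimed. The only step that needs any justification at all is the density argument, namely that a polynomial (here even linear) function vanishing on the Zariski-open layer $\Omega$ must vanish on all of $\mathfrak{n}^{\ast}$; this is standard, following from the fact that $\Omega$ is nonempty and Zariski open in the irreducible variety $\mathfrak{n}^{\ast}$, together with the description of $\Omega$ as the minimal element of the coarse stratification recalled above. I do not expect a genuine obstacle beyond reading the hypothesis correctly — that a single $X\in\mathfrak{r}$ may be tested against every $\lambda\in\Omega$ at once — which is exactly what makes the linear-functional-vanishes-on-a-dense-set step available.
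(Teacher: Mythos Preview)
Your proof is correct and follows essentially the same approach as the paper: both arguments establish $\mathfrak{z}(\mathfrak{n})\subseteq\mathfrak{n}(\lambda)$ trivially and then use that a linear relation $\lambda([X,Y])=0$ holding for all $\lambda$ in the Zariski-open layer $\Omega$ forces $[X,Y]=0$. The paper phrases this last step as ``linear independence of the coordinates of $\lambda$'' and argues by contradiction, while you phrase it as Zariski density of $\Omega$ and argue directly, but the underlying idea is identical.
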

\begin{proof}
First, it is clear from its definition that $\mathfrak{n}\left(  \lambda\right)  \supseteq
\mathfrak{z}\left(  \mathfrak{n}\right)  .$ Second, let us suppose that
there exits some $W\in\mathfrak{n}\left(  \lambda\right)  $ such that $W$ is not a
central element. Thus, there must exist at least one basis element $X$ such
that $\left[  W,X\right]  $ is non-trivial but $\lambda\left[
W,X\right]  =0.$ Using the structure constants of the Lie algebra, let us
supposed that $\left[  W,X\right]  =\sum_{k}c_{k}Z_{k}$ for some non-zero constant real numbers $c_k$. Then it must be the
case that $\sum_{k}c_{k}\lambda_{k}=0$ where $\lambda_k$ is the $k$-th coordinate of the linear function $\lambda$ for all $\lambda\in\Omega$. By the linear independency of the coordinates of $\lambda$, $c_{k}=0$ for all $k$. We reach a contradiction.
\end{proof}
\vskip 0.5cm
According to the orbit method, all irreducible representations of $N$ are in
one-to-one correspondence with coadjoint orbits which are parametrized by a
smooth cross-section $\Sigma$ homeomorphic with $\Omega/N$ via Kirillov map.
Defining for each linear functional $\lambda$ in the generic layer, a character of
$\exp\mathfrak{p}(\lambda)$ such that $\chi_{\lambda}\left(  \exp X\right)  =e^{2\pi
i\lambda\left(  X\right)  }$, we realize almost all the unitary irreducible
representations of $N$ ``a la Mackey" as $\pi_{\lambda}=\mathrm{Ind}_{\exp\mathfrak{p}(\lambda)}^{N}\left(  \chi_{\lambda}\right)  .$ An explicit realization of $\left\{  \pi
_{\lambda}:\lambda\in\Sigma\right\}  $ is discussed later on in this section. We invite
the reader to refer to \cite{ArnalCurrey} for more details concerning the
construction of $\Sigma.$ \newline For the remaining of this paper, we will assume that
we are only dealing with a ``nicer" class of nilpotent Lie algebras such that
the following hold:

\begin{enumerate}
\item For any linear functional $\lambda$ in the layer $\Omega,$ the polarization
subalgebra $\mathfrak{p}\left(\lambda\right)$ is constant, and the stabilizer subalgebra $\mathfrak{n}\left(\lambda\right)$ for the coadjoint action on $N$ on $\lambda\in \Omega$ is constant as well. In other words, there exit bases for $\mathfrak{p}(\lambda)$ and $\mathfrak{n}(\lambda)$ which do not depend on the linear functional $\lambda.$ We simply write $\mathfrak{p}(\lambda)=\mathfrak{p}$ and $\mathfrak{n}(\lambda)=\mathfrak{z(\mathfrak{n})}.$

\item $\mathfrak{n}/\mathfrak{p}$, $\mathfrak{p}$ and $\mathfrak{p}/\mathfrak{z(\mathfrak{n})}$ are commutative
algebras such that 
$$
\mathfrak{n}  =  \mathfrak{z(\mathfrak{n})} \oplus\left(\mathbb{R}Y_{1}\oplus\cdots\oplus
\mathbb{R}Y_{d}\right)  \oplus\left(\mathbb{R}X_{1}\oplus\cdots\oplus\mathbb{R}X_{d}\right) $$ with  
$\mathfrak{p}=\mathfrak{z(\mathfrak{n})} \oplus\left(\mathbb{R}Y_{1}\oplus\cdots\oplus\mathbb{R}Y_{d}\right)
$
and
$
\mathfrak{n}=\mathfrak{p}\oplus\left(\mathbb{R}X_{1}\oplus\cdots\oplus\mathbb{R}X_{d}\right).
$

\item $\mathfrak{n}$ is 2-step. In other words, $\left[  \mathfrak{n,n}%
\right]  \subset\mathfrak{z}\left(  \mathfrak{n}\right)  \ $ and given any
$X_{k,}Y_{r}\in\mathfrak{n},$ $\left[  X_{k},Y_{r}\right]  =\sum_{k_{r_{j}}%
}c_{k_{r_{j}}}Z_{k_{r_{j}}},$ where $c_{k_{r_{j}}}$ are structure constants
which are not necessarily nonzero. Letting $\mathfrak{m}_1=\mathbb{R}Y_1\oplus\cdots\oplus\mathbb{R}Y_d$,  $\mathfrak{m}=\mathbb{R}X_1\oplus\cdots\oplus\mathbb{R}X_d$ and $M =\exp(\mathfrak{m})$. $P=\exp\mathfrak{p}$, and $M_1=\exp\mathfrak{m}_1$ are commutative Lie groups such that $N=P\rtimes M.$ $M$ acts on $P$ as follows. For any $m\in M$ and $x\in P$, $m\cdot x=Ad_{m}x=mxm^{-1}$ and the matrix representing the linear operator $ad\log(m)$ is a nilpotent matrix with $ad\log m\neq\textbf{0}$ but $(ad\log m)^2=\textbf{0}$ ($\textbf{0}$ is the $n\times n$ matrix with zero entries everywhere).
\end{enumerate}
There is a fairly large class of nilpotent Lie groups which satisfy the criteria above. Here are just a few examples.
\begin{enumerate}
\item Let $\mathbb{H}$ be the $2d+1$-dimensional Heisenberg Lie group, with Lie algebra spanned by the basis $\{Z,Y_1,\cdots, Y_d,X_1,\cdots, X_d\}$ with the following non-trivial Lie brackets $[X_i,Y_i]=Z$ for $1\leq i \leq d.$ Now, let $N=\mathbb{H}\times \mathbb{R}^k.$ Both $N$ and $\mathbb{H}$ belong to the class of nilpotent Lie groups described above. 
\item Let $N$ be a nilpotent Lie group, with its Lie algebra $\mathfrak{n}$ spanned by the following basis $\{Z_1,Z_2,Y_1,Y_2,X_1,X_2\}$ with non-trivial Lie brackets $$[X_1,Y_1]=[X_2,Y_2]=Z_1,$$ and $[X_1,Y_2]=[X_2,Y_1]=Z_2$. This group also satisfies all the conditions above.
\item Let $N$ be a nilpotent Lie group with its Lie algebra $\mathfrak{n}$ spanned by the basis $\{Z_1,Z_2,Z_3,Z_4,Y_1,Y_2,X_1,X_2\}$ with the following nontrivial Lie brackets $[X_1,Y_1]=Z_2,[X_2,Y_1]=[X_1,Y_2]=Z_3,$ and $ [X_2,Y_2]=Z_4$. There is a generalization of this group which we describe here. Fix a natural number $d$. Let $N$ be a nilpotent Lie group with Lie algebra $\mathfrak{n}$ spanned by the following basis $\{Z_1,\cdots,Z_{2d},Y_1,\cdots,Y_d,X_1,\cdots,X_d\},$ with the following non-trivial Lie brackets; for $i,j\geq 1$, and $i,j\leq d$, $[X_j,Y_i]=Z_{i+j}$. The center of $\mathfrak{n}$ is $2d$-dimensional and the commutator ideal $[\mathfrak{n},\mathfrak{n}]$ is spanned by $\{Z_2,\cdots, Z_{2d}\}.$
\end{enumerate}

\begin{definition}
For a given basis element $Z_{k}\in\mathfrak{n}$, we define the dual basis element $\lambda_{k}\in\mathfrak{n}^{\ast}$
such that
\[
\lambda_{k}\left(  Z_{j}\right)  =\left\{
\begin{array}
[c]{c}
0\text{ if }k\neq j\\
1\text{ if }k=j
\end{array}
\right.  .
\]
\end{definition}

\begin{lemma}
Under our assumptions, for this class of groups, a cross-section for the coadjoint orbits of $N$ acting on the dual of $\mathfrak{n}$ is described as follows
\begin{align*}
\Sigma &  =\left\{  \left(  \lambda_{1},\cdots,\lambda_{n-2d},0,\cdots,0\right)  \right\}
\cap\Omega =\mathfrak{z}\left(  \mathfrak{n}\right)  ^{\ast}\cap\Omega.
\end{align*}
Furthermore identifying $\mathfrak{z}\left(  \mathfrak{n}\right)  ^{\ast}$ with
$\mathbb{R}^{n-2d},$ $\Sigma$ is a dense and open co-null subset of $\mathbb{R}^{n-2d}$ with respect to  the canonical Lebesgue measure.
\end{lemma}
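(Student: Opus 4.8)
The plan is to exploit the semidirect‑product structure $N=P\rtimes M$ together with the $2$-step hypothesis, which makes the coadjoint action affine and, crucially, leaves the restriction of a functional to $\mathfrak{z}(\mathfrak{n})$ fixed along each orbit. First I would set up the dictionary between the abstract stratification of \cite{ArnalCurrey} and the present coordinates: taking the Jordan--H\"older basis in the order $Z_1,\dots,Z_{n-2d}$ (a basis of $\mathfrak{z}(\mathfrak{n})$), then $Y_1,\dots,Y_d$, then $X_1,\dots,X_d$, one checks that the jump-index set of the generic layer is $\mathbf{e}=\{n-2d+1,\dots,n\}$, so that the matrix $V(\lambda)$ of \eqref{matrixV} is precisely the Gram matrix of the skew form $B_\lambda(U,W)=\lambda[U,W]$ on $\mathfrak{m}_1\oplus\mathfrak{m}$. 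Since $[\mathfrak{m}_1,\mathfrak{m}_1]=[\mathfrak{m},\mathfrak{m}]=0$ and $[\mathfrak{m}_1,\mathfrak{m}]\subseteq\mathfrak{z}(\mathfrak{n})$, this matrix has block form $\begin{pmatrix}0 & C(\lambda)\\ -C(\lambda)^{tr} & 0\end{pmatrix}$ with $C(\lambda)=[\lambda[Y_i,X_j]]_{i,j=1}^d$, and $\lambda\in\Omega$ amounts exactly to $C(\lambda)$ (equivalently $V(\lambda)$) being nonsingular; moreover, because $[\mathfrak{n},\mathfrak{n}]\subseteq\mathfrak{z}(\mathfrak{n})$, each entry of $V(\lambda)$ and hence $\det V(\lambda)$ is a polynomial in $\lambda_1,\dots,\lambda_{n-2d}$ alone.

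Next I would prove that $\Sigma=\mathfrak{z}(\mathfrak{n})^\ast\cap\Omega$ meets every coadjoint orbit contained in $\Omega$. Fix $\lambda\in\Omega$ and $W=W_1+W_2\in\mathfrak{m}_1\oplus\mathfrak{m}$; by the $2$-step property $(\exp W\cdot\lambda)(U)=\lambda(U)-\lambda[W,U]$. Evaluating on $U\in\mathfrak{z}(\mathfrak{n})$ returns $\lambda(U)$ (the central part is an orbit invariant), evaluating on $Y_i$ involves only $W_2$, and evaluating on $X_j$ involves only $W_1$. Imposing $(\exp W\cdot\lambda)(Y_i)=0$ and $(\exp W\cdot\lambda)(X_j)=0$ for all $i,j$ therefore reduces to two $d\times d$ linear systems whose coefficient matrices are $C(\lambda)$ and $C(\lambda)^{tr}$ up to sign; these are invertible since $\lambda\in\Omega$, so there is a unique $W$ with $\exp W\cdot\lambda$ vanishing on $\mathfrak{m}_1\oplus\mathfrak{m}$, i.e. $\exp W\cdot\lambda\in\mathfrak{z}(\mathfrak{n})^\ast$, and $N$-invariance of $\Omega$ places it in $\Sigma$. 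For uniqueness, if $\lambda,\mu\in\Sigma$ lie on a common orbit, write $\mu=\exp W\cdot\lambda$ with $W=W_0+W_1+W_2$, $W_0\in\mathfrak{z}(\mathfrak{n})$; evaluating $\mu=\lambda-\lambda[W,\cdot]$ on the $Y_i$ and $X_j$ and using that both $\lambda$ and $\mu$ annihilate $\mathfrak{m}_1\oplus\mathfrak{m}$ gives the same nonsingular systems with zero right-hand side, forcing $W_1=W_2=0$; then $\exp W$ is central and acts trivially, so $\mu=\lambda$. Hence $\Sigma$ is a cross-section, and the identity $\Sigma=\{(\lambda_1,\dots,\lambda_{n-2d},0,\dots,0)\}\cap\Omega=\mathfrak{z}(\mathfrak{n})^\ast\cap\Omega$ is just a rewriting.

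For the topological and measure-theoretic assertion I would transport $\Omega$ through the identification $\mathfrak{z}(\mathfrak{n})^\ast\cong\mathbb{R}^{n-2d}$. Because the only nonzero block of $M(\lambda)$ is $V(\lambda)$, we have $\mathrm{rank}\,M(\lambda)=\mathrm{rank}\,V(\lambda)$, and since $2d$ is the maximal coadjoint-orbit dimension, a functional $\lambda\in\mathfrak{z}(\mathfrak{n})^\ast$ lies in $\Omega$ precisely when $V(\lambda)$ is nonsingular. Thus $\Sigma$ corresponds to $\{x\in\mathbb{R}^{n-2d}:p(x)\neq0\}$ with $p(x)=\det V(\lambda)$ the polynomial isolated in the first step. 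This $p$ is not identically zero: the minimal layer $\Omega$ is Zariski-open and nonempty, and by the surjectivity just established $\Omega\cap\mathfrak{z}(\mathfrak{n})^\ast\neq\emptyset$, so $p$ is nonzero somewhere. Consequently the complement of $\Sigma$ in $\mathbb{R}^{n-2d}$ is the zero set of a nonzero polynomial, hence closed, nowhere dense and Lebesgue-null, which is the claim.

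The step I expect to be the main obstacle is the first one: establishing the dictionary, i.e. verifying that the jump indices of the generic layer are exactly the $2d$ indices attached to $\mathfrak{m}_1\oplus\mathfrak{m}$ and that ``$V(\lambda)$ nonsingular'' is literally the membership condition for $\Omega$ in the stratification of \cite{ArnalCurrey}, using the standing hypotheses $N=P\rtimes M$, $\mathfrak{n}$ $2$-step, and $\dim\mathfrak{m}=\dim\mathfrak{m}_1$. Once that identification is in hand, the surjectivity, injectivity, and density statements are short linear algebra together with a single nonvanishing-polynomial argument.
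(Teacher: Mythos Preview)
Your argument is correct, but it differs from the paper's in one essential respect. The paper does not verify that $\Sigma$ is a cross-section by hand: after asserting that the jump indices are $\mathbf{e}=\{n-2d+1,\dots,n\}$, it simply invokes Theorem~4.5 of \cite{ArnalCurrey}, which gives the general formula for the cross-section of the generic layer in terms of the complement of the jump indices. You instead prove the cross-section property directly, exploiting that for a $2$-step group the coadjoint action is affine, $\exp W\cdot\lambda=\lambda-\lambda[W,\cdot]$, and reducing existence and uniqueness of the orbit representative in $\mathfrak{z}(\mathfrak{n})^{\ast}$ to the invertibility of $C(\lambda)$. This buys you a self-contained argument that does not depend on the machinery of \cite{ArnalCurrey}, at the cost of a few extra lines; conversely, the paper's route is shorter but opaque unless one has that reference at hand. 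For the topological and measure-theoretic claim the two proofs coincide: both observe that $\det V(\lambda)$ is a nonzero polynomial in the central coordinates and conclude that its zero set is closed, nowhere dense and Lebesgue-null. Your remark that the ``dictionary'' step is the only nontrivial one is accurate and matches what the paper leaves implicit.
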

\begin{proof}
The jump indices for each $\lambda$ being $\mathbf{e=}\left\{  n-2d+1,\cdots
,n\right\}  .$ By Theorem 4.5 in \cite{ArnalCurrey}, $\Sigma=\left\{  \left(
\lambda_{1},\cdots,\lambda_{n-2d},0,\cdots,0\right)  \right\}  \cap \:\Omega.$ Referring to
the definition of $\Omega$ in (\ref{omega}), the proof of the rest of the lemma
follows. Notice that $\mathrm{\det}\left(  V\left(  \lambda\right)  \right)  $
is a non-zero polynomial function defined on $\mathfrak{z}\left(
\mathfrak{n}\right)  ^{\ast}=\mathbb{R}^{n-2d}.$ Thus, $\mathrm{\det}\left(  V\left(  \lambda\right)  \right)  $ is
supported on a co-null set of $\mathbb{R}^{n-2d}$ with respect to the Lebesgue measure.
\end{proof}

We refer the reader to \cite{Corwin} which is a standard reference book for representation theory of nilpotent Lie groups. In this paragraph, we will give an almost complete description of the unitary irreducible representations of $N$. They are almost all parametrized by $\Sigma$ and they are of the form 
$\pi_{\lambda}=\mathrm{Ind}_{\exp\mathfrak{p}(\lambda)}^{N}\left(  \chi_{\lambda}\right)  $ ($\lambda\in\Sigma$) acting in the
Hilbert completion of the functions space
\[
\mathbf{B}=\left\{
\begin{array}
[c]{c}
f:N\rightarrow\mathbb{C}\text{ such that }f\left(  xy\right)  =\chi_{\lambda}\left(  y\right)  ^{-1}f\left(
x\right)  \text{ for }y\in\exp\mathfrak{p,}\text{ }\\
\text{and }x\in N/\exp\mathfrak{p}\text{ and }\int_{N/\exp\mathfrak{p}
}f\left(  x\right)  d\overline{x}<\infty
\end{array}
\right\}
\]
which is isometric and isomorphic with $L^{2}\left(  N/\exp\mathfrak{p}\right)  $ which
we naturally identify with $L^{2}\left(\mathbb{R}^{d}\right)  $ via the identification
\[
\exp\left(  x_{1}X_{1}+\cdots+x_{d}X_{d}\right)  \mapsto\left(  x_{1}
,\cdots,x_{d}\right)  .
\]
The action of $\pi_{\lambda}$ is obtained in the following way: $\pi_{\lambda}\left(
x\right)  f\left(  y\right)  =f\left(  x^{-1}y\right)  $ for $f\in\mathbf{B.}$ We fix a coordinate system for the element of $N$. More precisely, for any $n\in N$, 
\[
n=\exp\left(  z_{1}Z_{1}+\cdots+z_{n-2d}Z_{n-2d}\right)  \exp\left(  y_{1}
Y_{1}+\cdots+y_{d}Y_{d}\right)  \exp\left(  x_{1}X_{1}+\cdots+x_{d}
X_{d}\right)
\]
and we have,
\begin{enumerate}
\item Let $F\in L^2\left(\mathbb{R}^d\right)$, 
$$\pi_{\lambda}\left(  \exp z_{k}Z_{k}\right)  F\left(  x_{1},\cdots,x_{d}\right)
=e^{2\pi i\lambda z_{k}}F\left(  x_{1},\cdots,x_{d}\right)  \text{ for }Z_{k}
\in\mathfrak{z}\left(  \mathfrak{n}\right) .$$ Elements of the center of the group act on $L^2(\mathbb{R}^d)$ by multiplications by characters.
\item  
$\pi_{\lambda}\left(  \exp\left(  t_{1}X_{1}+\cdots+t_{d}X_{d}\right)  \right)
F\left(  x_{1},\cdots,x_{d}\right)  =F\left(  x_{1}-t_{1},\cdots,x_{d}
-t_{d}\right).
$ Thus, elements of the subgroup $M$ act by translations on $L^2(\mathbb{R}^d)$.
\item 
Put $x=\left(  x_{1},\cdots,x_{d}\right)  ,$ $y=\left(  y_{1},\cdots
,y_{d}\right)$ and define for $\lambda\in \Sigma$,
\begin{equation}\label{matrixB}
B\left(\lambda\right)=-
\left(
\begin{array}
[c]{ccc}
\lambda\left[  X_{1},Y_{1}\right]   & \cdots & \lambda\left[  X_{d}%
,Y_{1}\right]  \\
\vdots &  & \vdots\\
\lambda\left[  X_{d},Y_{1}\right]   & \cdots & \lambda\left[  X_{d}%
,Y_{d}\right]
\end{array}
\right).
\end{equation}
$\pi_{\lambda}\left(  \exp y_{1}Y_{1}\cdots\exp y_{d}Y_{d}\right)  F\left(x\right)  =e^{2\pi i\left\langle x^{tr},\: B\left(  \lambda\right)
y^{tr}\right\rangle }F\left(  x\right) .$ Therefore, elements of the subgroup $M_1$ act by modulations on $L^2(\mathbb{R}^d)$.
\end{enumerate}
This completes the description of all the unitary irreducible representations of $N$ which will appear in the Plancherel transform. Next, we consider the Hilbert space $L^{2}\left(  N\right)  $ where $N$ is
endowed with its canonical Haar measure. $\mathcal{P}$ denotes the Plancherel
transform on $L^{2}\left(  N\right)  ,$ $\lambda=\left(  \lambda_{1},\cdots
,\lambda_{n-2d}\right) \in \Sigma $ and $d\mu\left(  \lambda\right)  =\left\vert
\mathrm{\det}\left( B\left(  \lambda\right)  \right)  \right\vert
d\lambda$ is the Plancherel measure (see chapter 4 in \cite{Corwin}). We have $$\mathcal{P}:L^{2}\left(  N\right)
\rightarrow\int_{\Sigma}^{\oplus}L^{2}\left(\mathbb{R}^{d}\right)  \otimes L^{2}\left(\mathbb{R}^{d}\right)  d\mu\left(  \lambda\right)  $$ where the Fourier transform is defined on $L^2(N)\cap L^1(N)$ by 
$$
\mathcal{F}\left(  f\right)\left(\lambda\right)  =\int_{\Sigma}f\left(  n\right)  \pi_{\lambda
}\left(  n\right) dn  ,
$$ and the Plancherel transform is the extension of the Fourier transform to $L^2(N)$ inducing the equality 
$
\left\Vert f\right\Vert _{L^{2}\left(  N\right)  }^{2}=\int_{\Sigma}\left\Vert
\mathcal{P}\left(  f\right)  \left(  \lambda\right)  \right\Vert_{\mathcal{HS}} ^{2}
d\mu\left(  \lambda\right)$  ($||\cdot ||_{\mathcal{HS}}$ denotes the Hilbert Schmidt norm on $L^2\left(\mathbb{R}^{d}\right)  \otimes L^{2}\left(\mathbb{R}^{d}\right)$). Let $L$ be the left regular representation of the group $N.$ We have,
\[
L \simeq \mathcal{P}L\mathcal{P}^{-1}=\int_{\Sigma}^{\oplus}\pi_{\lambda}\otimes\mathbf{1}_{L^{2}\left(\mathbb{R}^{d}\right)  }d\mu\left(  \lambda\right),
\]
where 
$\mathbf{1}_{L^{2}\left(\mathbb{R}^{d}\right)}$ is the identity operator on $L^{2}\left(\mathbb{R}^{d}\right)$ and the following holds almost everywhere: $\mathcal{P}(L(x)\phi)(\lambda)=\pi_{\lambda}(x)\circ \mathcal{P}\phi(\lambda).$ Furthermore the Plancherel transform is used to characterize all left-invariant subspaces of $L^2(N)$.  In fact, referring to Corollary $4.17$ in $\cite{Fuhr cont}$, the projection $P$ onto any left-invariant subspace of $L^2(N)$ corresponds to a field of projections such that  $\mathcal{P}P\mathcal{P}^{-1}\simeq \int_{S}^{\oplus}(\mathbf{1}_{L^2(\mathbb{R}^d)}\otimes \widehat{P}_{\lambda})d\mu(\lambda)$ where $S$ is measurable subset of $\Sigma$, and for $\mu$ a.e. $\lambda,$ $\widehat{P}_{\lambda}$ corresponds to a projection operator onto $L^2(\mathbb{R}^d).$

In general, a lattice subgroup $\Gamma$ is a uniform subgroup of $N$ i.e
$N/\Gamma$ is compact and $\log\Gamma$ is an additive subgroup of
$\mathfrak{n.}$ Since such class of discrete sets is too restrictive, we relax the definition to obtain some \textbf{quasi lattices} in $N$. 
\begin{definition} Let $U$, $W$ be two Borel subsets of $N$, and $\Gamma\subset N$ is countable.  We say that $\Gamma$ is \textbf{$U$-dense} if $\Gamma U=\cup_{\gamma\in\Gamma}\gamma U=G$. $\Gamma$ is called \textbf{$W$-separated} if $\gamma W\cap \gamma'W$ is a null set of $N$ for distinct $\gamma,\gamma'\in \Gamma$, and $\Gamma$ is a \textbf{quasi-lattice} if there exists a relatively compact Borel set $C$ such that $\Gamma$ is both $C$-separated and $C$-dense.
\end{definition}
\begin{definition}
 Let $a,q,b$ be vectors with strictly positive real number entries such that $a=(a_1,\cdots,a_{n-2d})$, $b=(b_1\cdots b_d) $ and $q=(q_1,\cdots q_d)$. We denote $\Gamma_{a,q,b}$ the family of \textbf{quasi lattices} such that 
\[
\Gamma_{a,q,b}=\left\{
\begin{array}
[c]{c}{\displaystyle\prod\limits_{j=1}^{n-2d}}\exp\left(  \dfrac{m_{j}}{a_{j}}Z_{j}\right){\displaystyle\prod\limits_{j=1}^{d}}\exp\left(  \dfrac{k_{j}}{q_{j}}Y_{j}\right){\displaystyle\prod\limits_{j=1}^{d}}\exp\left(  \dfrac{n_{j}}{b_{j}}X_{j}\right) :  \\
m_{j},k_{j},n_{j}\in\mathbb{Z}
\end{array}
\right\}  .
\]
Elements of $\Gamma_{a,q,b}$ will be of the type 
$
\gamma_{a,q,b} =\exp\left(  \frac{m_{1}}{a_1}Z_{1}\right)  \cdots\exp\left(
\frac{m_{n-2d}}{a_{n-2d}}Z_{n-2d}\right)$ $\left(  \exp \frac{k_{1}}{q_{1}}Y_{1}\right)
\cdots$ $\left(  \exp \frac{k_{d}}{q_{d}}Y_{d}\right)$ $  \exp\left(  \frac{n_{1}}{b_1}X_{1}+\cdots+\frac{n_{d}}{b_d}X_{d}\right).$ For each fixed quasi-lattice $\Gamma_{a,q,b}$ we also define the corresponding \textbf{reduced quasi lattice}
\[
\Gamma_{q,b}=\left\{{\displaystyle\prod\limits_{j=1}^{d}}\exp\left(  \frac{k_{j}}{q_j}Y_{j}\right){\displaystyle\prod\limits_{j=1}^{d}}\exp\left(  \frac{n_{j}}{b_j}X_{j}\right)  :k_{j},n_{j}\in\mathbb{Z}\right\}  .
\]
Elements of the reduced quasi lattice will be of the type $$\gamma_{q,b}=\left(  \exp \frac{k_{1}}{q_{1}}Y_{1}\right)
\cdots\left(  \exp \frac{k_{d}}{q_{d}}Y_{d}\right)  \exp\left(  \frac{n_{1}}{b_1}X_{1}%
+\cdots+\frac{n_{d}}{b_d}X_{d}\right)  .$$
\end{definition} 
\begin{definition} We say a function $f\in L^2(N)$ is \textbf{band-limited} if its Plancherel transform is supported on a bounded measurable subset of $\Sigma$.  
\end{definition}

Let $\mathbf{I}\subseteq \{\lambda\in \Sigma : 0\leq \lambda_i \leq a_i\}$ (without loss of generality, one could take $\mathbf{I}\subseteq \{\lambda\in \Sigma : -a_i/2\leq \lambda_i \leq a_i/2\}$). We fix $\left\{  \mathbf{u}\left(
\lambda\right) =\mathbf{u} :\lambda\in\mathbf{I}\right\}$ a measurable field of unit vectors in $L^2\left(\mathbb{R}^{d}\right).$ We consider the multiplicity-free subspace $\mathbf{F=}\int_{\mathbf{I}}^{\oplus}L^{2}\left(\mathbb{R}^{d}\right)  \otimes\mathbf{u}  $ $d\mu\left(\lambda\right) $ which is naturally isomorphic and isometric with $\int_{\mathbf{I}}^{\oplus}L^{2}\left(\mathbb{R}^{d}\right)  d\mu\left(  \lambda\right)  $ via the mapping: $
\left\{  f_{\lambda}\otimes\mathbf{u}  \right\}
_{\lambda\in\mathbf{I}}\mathbf{\mapsto}\text{ }\{f_{\lambda}\}_{\lambda\in\mathbf{I}}.
$ Observe that 
$$\left\{  \prod\limits_{k=1}^{n-2d}\dfrac{e^{2\pi i\left\langle \frac{m_k}{a_k},\cdot\right\rangle }}
{\sqrt{a_k}}:m_k\in\mathbb{Z}\right\}  $$ forms a Parseval frame for $L^{2}\left(  \mathbf{I}\right).$ Next, let $b=(b_1,\cdots,b_d)$, and $q=(q_1,\cdots,q_d).$ We define the $d\times d$ diagonal matrix $D(q)$ with entry $\frac{1}{q_i}$ on the ith row, and similarly, we define the following $d\times d$ matrix
\begin{equation}
A\left(  b\right)  =\left(
\begin{array}
[c]{ccc}
\dfrac{1}{b_1} & \cdots & 0\\
\vdots & \ddots & \vdots\\
0 & \cdots & \dfrac{1}{b_d}
\end{array}
\right)   \label{matrixB}.
\end{equation}
These matrices will be useful for us later.

As a general comment, we would like to mention here that, due to Hartmut F\"uhr, the concept of continuous wavelets associated to the left regular representation of locally compacts type I groups is well understood. A good source of reference is the monograph \cite{Fuhr cont}. We also bring to the reader's attention the following fact. In the case of the Heisenberg group, Azita Mayeli provided in $\cite{Azita}$ an explicit construction of band-limited Shannon wavelet using notions of frame multiresolution analysis. 

\begin{definition}
Let $\left(  \pi,\mathcal{H}_{\pi}\right)  $ be a unitary representation of $N$.  We define the map $\mathcal {W_{\eta}}:\mathcal{H}_{\pi}\rightarrow L^2(N)$ such that $\mathcal {W_{\eta}}\phi(x)=\langle \phi, \pi(x) \eta \rangle$. A vector $\eta\in\mathcal{H}_{\pi}$ is called \textbf{admissible} for the representation $\pi$ if $\mathcal {W_{\eta}}$ defines an isometry on $\mathcal{H}_{\pi}.$ In this case, $\eta$ is called a \textbf{continuous wavelet} or an admissible vector. 
\end{definition}
Let $L$ denote the left regular representation, due to Hartmut F\"uhr \cite {Fuhr cont}, it is known that in general
for a non discrete locally compact topological group of type I, $\left(  L,L^{2}\left(
G\right)  \right)  $ is admissible if and only if $G$ is nonunimodular. Thus,
in fact for our class of groups, $\left(  L,L^{2}\left(  N\right)  \right)  $
is not admissible since any nilpotent Lie group is unimodular. However, there are subspaces of $L^{2}\left(  N\right)  $
which admit continuous wavelets for $L.$

\begin{lemma}\label{cont}
Given the closed left-invariant subspace of $L^2\left(  N\right)  ,$
$\mathcal{H=P}^{-1}\left(  \mathbf{H}\right)  ,$ such that
\[
\mathbf{H}=\int_{\mathbf{I}}^{\oplus}L^{2}\left(\mathbb{R}^{d}\right)  \otimes\mathbb{C}\text{-span}\{\mathbf{u}_1\left(  \lambda\right),\cdots,\mathbf{u_{m(\lambda)}}\left(  \lambda\right)\}  \text{ }d\mu\left(
\lambda\right)  .
\]
Assuming that $ \{\mathbf{u}_1\left(  \lambda\right),\cdots,\mathbf{u_{m(\lambda)}}\left(  \lambda\right)\}$ is an orthonormal set and $\left(  L|\mathcal{H},\mathcal{H}\right)  $ is admissible, an admissible vector $\eta$ satisfies the following criteria:
 $\left\Vert \eta\right\Vert ^{2}=\int_{\mathbf{I}}\mathbf{m}(\lambda)d\mu\left(
\lambda\right).$
\end{lemma}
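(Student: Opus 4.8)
The plan is to transport the admissibility requirement through the Plancherel transform, where it becomes a pointwise, essentially finite--dimensional condition on the operator field attached to $\eta$. Using the standard identification $L^2(\mathbb{R}^d)\otimes L^2(\mathbb{R}^d)\cong\mathcal{B}_2\big(L^2(\mathbb{R}^d)\big)$ with the Hilbert--Schmidt operators, the space $\mathbf{H}$ consists exactly of the fields $\lambda\mapsto T(\lambda)$ supported on $\mathbf{I}$ with $T(\lambda)=T(\lambda)\widehat{P}_\lambda$, where $\widehat{P}_\lambda$ is the orthogonal projection of $L^2(\mathbb{R}^d)$ onto $V_\lambda:=\mathbb{C}\text{-span}\{\mathbf{u}_1(\lambda),\dots,\mathbf{u}_{m(\lambda)}(\lambda)\}$; equivalently $\mathrm{range}\,T(\lambda)^{*}\subseteq V_\lambda$. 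Write $T_\phi(\lambda)=\mathcal{P}\phi(\lambda)$ for $\phi\in\mathcal{H}$ and $T_\eta(\lambda)=\mathcal{P}\eta(\lambda)$. First I would compute the wavelet coefficient in the Plancherel picture: since $\mathcal{P}\big(L(x)\eta\big)(\lambda)=\pi_\lambda(x)\circ T_\eta(\lambda)$ and $\mathcal{P}$ is unitary,
\[
\mathcal{W}_\eta\phi(x)=\langle\phi,L(x)\eta\rangle_{L^2(N)}=\int_{\mathbf{I}}\langle T_\phi(\lambda),\pi_\lambda(x)T_\eta(\lambda)\rangle_{\mathcal{HS}}\,d\mu(\lambda)=\int_{\mathbf{I}}\mathrm{tr}\big(\pi_\lambda(x)^{*}\,T_\phi(\lambda)\,T_\eta(\lambda)^{*}\big)\,d\mu(\lambda),
\]
the last step by cyclicity of the trace, legitimate because $T_\phi(\lambda)T_\eta(\lambda)^{*}$ is trace class and $\pi_\lambda(x)$ is unitary. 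Comparing with operator--valued Fourier inversion on $N$ (extended to $L^2$ by density), the right--hand side is the inverse Plancherel transform of the field $\lambda\mapsto\chi_{\mathbf{I}}(\lambda)T_\phi(\lambda)T_\eta(\lambda)^{*}$; hence $\mathcal{P}\big(\mathcal{W}_\eta\phi\big)(\lambda)=\chi_{\mathbf{I}}(\lambda)T_\phi(\lambda)T_\eta(\lambda)^{*}$, and the Plancherel identity yields $\|\mathcal{W}_\eta\phi\|_{L^2(N)}^{2}=\int_{\mathbf{I}}\|T_\phi(\lambda)T_\eta(\lambda)^{*}\|_{\mathcal{HS}}^{2}\,d\mu(\lambda)$.

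Now I would invoke admissibility of $\eta$, i.e.\ that this quantity equals $\|\phi\|^{2}=\int_{\mathbf{I}}\|T_\phi(\lambda)\|_{\mathcal{HS}}^{2}\,d\mu(\lambda)$ for every $\phi\in\mathcal{H}$. Since $\mathcal{P}$ identifies $\mathcal{H}$ with the full direct integral $\int_{\mathbf{I}}^{\oplus}L^2(\mathbb{R}^d)\otimes V_\lambda\,d\mu(\lambda)$, the field $T_\phi$ may be localized to arbitrarily small subsets of $\mathbf{I}$, so a standard argument forces the two integrands to agree for $\mu$--a.e.\ $\lambda$: for a.e.\ $\lambda$ and every $T\in\mathcal{B}_2\big(L^2(\mathbb{R}^d)\big)$ with $\mathrm{range}\,T^{*}\subseteq V_\lambda$, one has $\|T\,T_\eta(\lambda)^{*}\|_{\mathcal{HS}}^{2}=\|T\|_{\mathcal{HS}}^{2}$. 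Expanding and using cyclicity, $\|T\,T_\eta(\lambda)^{*}\|_{\mathcal{HS}}^{2}=\mathrm{tr}\big((T^{*}T)(T_\eta(\lambda)^{*}T_\eta(\lambda))\big)$, and both $T^{*}T$ and $T_\eta(\lambda)^{*}T_\eta(\lambda)$ are positive operators supported on $V_\lambda$. Letting $T$ range over the rank--one operators $v\otimes v$ with $v$ a unit vector of $V_\lambda$ (these are Hilbert--Schmidt with $\mathrm{range}\,T^{*}=\mathbb{C}v\subseteq V_\lambda$, and $T^{*}T$ is the orthogonal projection onto $\mathbb{C}v$), the identity forces $\langle T_\eta(\lambda)^{*}T_\eta(\lambda)v,v\rangle=\|v\|^{2}$ for all $v\in V_\lambda$, that is, $T_\eta(\lambda)^{*}T_\eta(\lambda)=\mathbf{1}_{V_\lambda}$ for $\mu$--a.e.\ $\lambda\in\mathbf{I}$; equivalently $T_\eta(\lambda)$ restricts to an isometry of $V_\lambda$ into $L^2(\mathbb{R}^d)$. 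Since the $\{\mathbf{u}_i(\lambda)\}_{i=1}^{m(\lambda)}$ are orthonormal, $\dim V_\lambda=\mathbf{m}(\lambda)$, hence
\[
\|\eta\|^{2}=\int_{\mathbf{I}}\|T_\eta(\lambda)\|_{\mathcal{HS}}^{2}\,d\mu(\lambda)=\int_{\mathbf{I}}\mathrm{tr}\big(T_\eta(\lambda)^{*}T_\eta(\lambda)\big)\,d\mu(\lambda)=\int_{\mathbf{I}}\mathrm{tr}(\mathbf{1}_{V_\lambda})\,d\mu(\lambda)=\int_{\mathbf{I}}\mathbf{m}(\lambda)\,d\mu(\lambda),
\]
which is the asserted identity.

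The step I expect to be the main obstacle is the second half of the first paragraph: rigorously identifying $\mathcal{W}_\eta\phi$ with the inverse Plancherel transform of $\lambda\mapsto T_\phi(\lambda)T_\eta(\lambda)^{*}$. This requires the operator--valued Fourier inversion theorem for $N$ together with careful trace--class/Hilbert--Schmidt bookkeeping, and the fact that the relevant field is square--integrable (so that Plancherel applies) is precisely where the hypothesis that $\mathcal{W}_\eta$ is an isometry into $L^2(N)$ enters. Alternatively, this whole passage can be imported from F\"uhr's general analysis of admissible vectors for subrepresentations of the regular representation of a type I group in \cite{Fuhr cont}, from which the normalization $T_\eta(\lambda)^{*}T_\eta(\lambda)=\mathbf{1}_{V_\lambda}$, and hence the norm formula, follows directly. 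The reduction of the global integral identity to the pointwise one is routine once one records that $\mathcal{H}$ corresponds under $\mathcal{P}$ to the full direct integral $\int_{\mathbf{I}}^{\oplus}L^2(\mathbb{R}^d)\otimes V_\lambda\,d\mu(\lambda)$.
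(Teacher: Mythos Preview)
Your argument is correct and is essentially an unpacking of the very result the paper invokes: the paper's own proof consists solely of the citation ``See Theorem 4.22 in \cite{Fuhr cont}'', and what you wrote is precisely the Plancherel-side characterization of admissible vectors established there (the pointwise isometry condition $T_\eta(\lambda)^{*}T_\eta(\lambda)=\mathbf{1}_{V_\lambda}$ followed by taking traces). So your approach coincides with the paper's, just with the cited argument made explicit.
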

\begin{proof}
See Theorem 4.22 in \cite{Fuhr cont}.
\end{proof}

\section{Results}
In this section, we will provide solutions to the problems mentionned in Question \ref{Q1}, Question \ref{Q2}, and Question \ref{Q3} in the introduction of the paper. We start by fixing some notations which will be used throughout this section. Let $\mathcal{H}=\mathcal{P}^{-1}(\mathbf{F})$ be a multiplicity-free subspace of $L^2(N)$ such that $$\mathbf{F=}\int_{\mathbf{I}}^{\oplus}L^{2}\left(\mathbb{R}^{d}\right)  \otimes\mathbf{u}\: d\mu\left(\lambda\right) ,$$ and $\mathbf{u}$ is a fixed unit vector in $L^2(\mathbb{R}^d).$ Recall that $b=(b_1,\cdots,b_d)$ and $q=(q_1,\cdots,q_d)$. 

\begin{lemma}\label{gaborsystem}
 Let $\phi\in\mathcal{H}$ such that $\mathcal{P}(\phi)(\lambda)=F(\lambda)\otimes\mathbf{u}$ a.e. Recall the matrix $B(\lambda)$ as defined in (\ref{matrixB}). For almost every linear functional $\lambda\in\mathbf{I},$  $F\left(  \lambda
\right)  \in L^{2}\left(\mathbb{R}^{d}\right)$, and $\left\{  \pi_{\lambda}\left( \gamma_{q,b}\right)  F\left(
\lambda\right)  \right\}  _{\gamma_{q,b}}$ forms a
multivariate Gabor system (\ref{Gabor}) of the type $\mathcal{G}\left(  F\left(
\lambda\right)  ,\Lambda\left(  \lambda\right)  \right)  $ such that
$\Lambda\left(  \lambda\right)  $ is a separable full rank lattice of the
form $\Lambda\left(  \lambda\right)  =A\left(  b\right) \mathbb{Z}^{d}\times B\left(  \lambda\right) D(q)\mathbb{Z}^{d}.$ Furthermore, for a.e. $\lambda\in\mathbf{I}$, $$\mathrm{Vol}(\Lambda(\lambda))=\dfrac{\vert \det B(\lambda)\vert}{b_1\cdots b_{n-2d}q_1\cdots q_{n-2d}}.$$
\end{lemma}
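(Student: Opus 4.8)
The plan is to compute $\pi_\lambda(\gamma_{q,b})F(\lambda)$ explicitly from the concrete realization of $\pi_\lambda$ on $L^2(\mathbb{R}^d)$ recorded above, and then read off the translation and modulation lattices. First I would observe that a generic element of the reduced quasi-lattice factors as
\[
\gamma_{q,b}=\Big(\exp\tfrac{k_1}{q_1}Y_1\Big)\cdots\Big(\exp\tfrac{k_d}{q_d}Y_d\Big)\,\exp\Big(\tfrac{n_1}{b_1}X_1+\cdots+\tfrac{n_d}{b_d}X_d\Big),
\]
so that, in the coordinates $y=(y_1,\dots,y_d)$ and $t=(t_1,\dots,t_d)$ appearing in the translation and modulation formulas for $\pi_\lambda$, one has $y=D(q)k$ and $t=A(b)n$ with $k,n\in\mathbb{Z}^d$. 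Using that $\pi_\lambda$ is a homomorphism — first applying the $X$-block, which translates $F(\lambda)$ by $A(b)n$, and then the $Y$-block, which multiplies the result by $e^{2\pi i\langle B(\lambda)D(q)k,\,x\rangle}$ — one obtains for $x\in\mathbb{R}^d$
\[
\pi_\lambda(\gamma_{q,b})F(\lambda)(x)=e^{2\pi i\langle B(\lambda)D(q)k,\,x\rangle}\,F(\lambda)\big(x-A(b)n\big).
\]
This is precisely the element of $\mathcal{G}\big(F(\lambda),\,A(b)\mathbb{Z}^d\times B(\lambda)D(q)\mathbb{Z}^d\big)$, in the notation of (\ref{Gabor}), with translation parameter $A(b)n$ and modulation parameter $B(\lambda)D(q)k$; letting $(k,n)$ run over $\mathbb{Z}^d\times\mathbb{Z}^d$ produces the whole system, so $\Lambda(\lambda)=A(b)\mathbb{Z}^d\times B(\lambda)D(q)\mathbb{Z}^d$.

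Next I would verify the three auxiliary claims. That $F(\lambda)\in L^2(\mathbb{R}^d)$ for a.e. $\lambda$ follows from the Plancherel isometry: since $\mathcal{P}(\phi)(\lambda)=F(\lambda)\otimes\mathbf{u}$ and $\|\mathbf{u}\|=1$, we have $\|F(\lambda)\|^2=\|\mathcal{P}(\phi)(\lambda)\|_{\mathcal{HS}}^2$, and the integral of the right-hand side against $d\mu(\lambda)$ equals $\|\phi\|_{L^2(N)}^2<\infty$, so $F(\lambda)\in L^2(\mathbb{R}^d)$ off a $\mu$-null set. That $\Lambda(\lambda)$ is a separable full rank lattice for a.e. $\lambda\in\mathbf{I}$ follows because $A(b)$ and $D(q)$ are diagonal with strictly positive diagonal entries $1/b_i$ and $1/q_i$, while $\det B(\lambda)\neq 0$ for a.e. $\lambda\in\mathbf{I}\subseteq\Sigma$ — this being exactly the non-vanishing, on the cross-section $\Sigma$, of the polynomial giving the Plancherel density — so $B(\lambda)D(q)$ is invertible a.e. Finally, since $A(b)$ and $D(q)$ are diagonal,
\[
\mathrm{Vol}(\Lambda(\lambda))=\big|\det A(b)\big|\,\big|\det\!\big(B(\lambda)D(q)\big)\big|=\frac{|\det B(\lambda)|}{b_1\cdots b_d\,q_1\cdots q_d},
\]
which is the asserted identity.

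I expect the only delicate point to be bookkeeping: getting the order of the two blocks right (the $X$-block is applied to $F(\lambda)$ first, and the modulation character contributed by the $Y$-block is against the variable $x$ itself, hence unaffected by the preceding translation), and confirming that the modulation frequencies genuinely form $B(\lambda)D(q)\mathbb{Z}^d$. Since every invertible linear image of $\mathbb{Z}^d$ is a full rank lattice and the Gabor structure depends only on the ordered pair of lattices, any cosmetic discrepancy — a transpose on $B(\lambda)$, or $D(q)B(\lambda)^{tr}$ in place of $B(\lambda)D(q)$ — would not affect the statement. There is no analytic obstacle: granting the explicit action of $\pi_\lambda$ on $L^2(\mathbb{R}^d)$, the lemma is a direct unwinding of definitions.
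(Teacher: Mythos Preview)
Your proof is correct and follows the same route as the paper: compute $\pi_\lambda(\gamma_{q,b})F(\lambda)$ explicitly from the known translation/modulation formulas and identify the result as a Gabor system on the lattice $A(b)\mathbb{Z}^d\times B(\lambda)D(q)\mathbb{Z}^d$. The paper's proof is in fact terser than yours --- it records only the final formula --- whereas you also spell out why $F(\lambda)\in L^2(\mathbb{R}^d)$ a.e., why the lattice has full rank, and the volume computation; all of this is correct and welcome detail.
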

\begin{proof}
Following our description of the irreducible representations of $N$, we simply compute the action of the unitary irreducible representations restricted to the reduced quasi-lattice $\Gamma_{q,b}$. Given $F(\lambda)\in L^2(\mathbb{R}^d)$, and $\gamma_{q,b}\in \Gamma_{q,b}$, some simple computations show that
$$
\pi_{\lambda}\left(  \gamma_{q,b}\right)  F(\lambda)\left(  x_{1},\cdots,x_{d}\right)
 =e^{2\pi i\left\langle x^{tr},\: B\left(  \lambda\right)D(q)  k^{tr}\right\rangle
}F(\lambda)\left(  x_{1}-\frac{n_{1}}{b_1},\cdots,x_{d}-\frac{n_{d}}{b_d}\right)  .
$$
\end{proof}

\begin{proposition}
\label{main prop} Let $\phi$ be a vector in $\mathcal{H}.$  If $\left\{  L\left(  \gamma_{a,q,b}\right)  \phi\right\}  _{\gamma_{a,q,b}\in\Gamma_{a,q,b}}$
is a Parseval frame, then for $\mu$ a.e. $\lambda\in\mathbf{I},$ the
following must hold:
\begin{enumerate}
\item $\left\{ \prod_{k=1}^{n-2d}\sqrt{a_k}\left\vert \det B(\lambda)\right\vert^{1/2} \pi_{\lambda}\left( \gamma_{q,b} \right) \widehat{\phi}(\lambda ): \gamma_{q,b} \in \Gamma_{q,b} \right\}$ forms a Parseval frame in $L^2(\mathbb{R}^d)\otimes\mathbf{u}\simeq L^2(\mathbb{R}^d).$
\item $\mathrm{Vol}(\Lambda(\lambda))=\det\left\vert A\left(  b\right) B\left(  \lambda\right) D(q) \right\vert
\leq1.$
\end{enumerate}
\end{proposition}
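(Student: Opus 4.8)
The plan is to transport the Parseval frame condition for $\{L(\gamma_{a,q,b})\phi\}$ through the Plancherel transform and then separate the action of the central variables from the action of the reduced quasi-lattice $\Gamma_{q,b}$. First I would use the fact, recorded earlier, that $\mathcal{P}(L(x)\phi)(\lambda) = \pi_\lambda(x)\circ \mathcal{P}\phi(\lambda)$, together with the identification of $\mathbf{F}$ with $\int_{\mathbf{I}}^\oplus L^2(\mathbb{R}^d)\,d\mu(\lambda)$ under which $\mathcal{P}\phi(\lambda) = F(\lambda)\otimes\mathbf{u}$ corresponds to $\widehat{\phi}(\lambda) = F(\lambda)$. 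Writing a typical element $\gamma_{a,q,b}$ as a product of the central exponentials $\prod_j \exp(\tfrac{m_j}{a_j}Z_j)$ followed by an element $\gamma_{q,b}\in\Gamma_{q,b}$, and recalling that the center acts on $L^2(\mathbb{R}^d)$ by the scalar $\prod_k e^{2\pi i \lambda_k m_k/a_k}$, the frame operator identity $\sum_{\gamma}|\langle f, L(\gamma)\phi\rangle|^2 = \|f\|^2$ becomes, for every $f$ with $\mathcal{P}f$ supported in $\mathbf{I}$,
\[
\int_{\mathbf{I}}\sum_{m\in\mathbb{Z}^{n-2d}}\Bigl|\sum\nolimits_{\gamma_{q,b}}\ \langle \widehat{f}(\lambda),\, e^{2\pi i\langle m, \lambda/a\rangle}\,\pi_\lambda(\gamma_{q,b})\widehat{\phi}(\lambda)\rangle\Bigr|^2\, d\mu(\lambda) = \int_{\mathbf{I}}\|\widehat{f}(\lambda)\|^2\, d\mu(\lambda),
\]
where I have been slightly informal about which inner products are in $L^2(\mathbb{R}^d)$ and which are the integral over $\mathbf{I}$; the point is that the characters $e^{2\pi i\langle m,\lambda/a\rangle}$ in the $\lambda$-variable, suitably normalized by $\prod_k a_k^{-1/2}$, form a Parseval frame (indeed an orthonormal basis) for $L^2(\mathbf{I})$, as noted in the excerpt.

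Next I would exploit this orthogonality in the $\lambda$-direction to "decouple" the sum over $m$. Fixing an arbitrary measurable field $\lambda\mapsto g(\lambda)\in L^2(\mathbb{R}^d)$ and plugging $\widehat{f}(\lambda) = g(\lambda)$ into the identity above, one expands the inner sum over $m$ using the Parseval property of the normalized exponentials on $L^2(\mathbf{I})$; this converts the left-hand side into $\int_{\mathbf{I}} \bigl(\prod_k a_k\bigr)|\det B(\lambda)|\sum_{\gamma_{q,b}} |\langle g(\lambda), \pi_\lambda(\gamma_{q,b})\widehat{\phi}(\lambda)\rangle|^2\, d\lambda$ after absorbing the $d\mu(\lambda)=|\det B(\lambda)|\,d\lambda$ factor and being careful with where the $|\det B(\lambda)|$ lands. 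Matching this against $\int_{\mathbf{I}}\|g(\lambda)\|^2\,|\det B(\lambda)|\,d\lambda$ and invoking that $g$ is an arbitrary field forces, for $\mu$-a.e. $\lambda\in\mathbf{I}$, the pointwise identity
\[
\sum_{\gamma_{q,b}\in\Gamma_{q,b}} \Bigl|\bigl\langle g,\ \textstyle\prod_k\sqrt{a_k}\,|\det B(\lambda)|^{1/2}\,\pi_\lambda(\gamma_{q,b})\widehat{\phi}(\lambda)\bigr\rangle\Bigr|^2 = \|g\|^2 \quad\text{for all } g\in L^2(\mathbb{R}^d),
\]
which is exactly statement (1): the normalized system $\{\prod_k\sqrt{a_k}\,|\det B(\lambda)|^{1/2}\,\pi_\lambda(\gamma_{q,b})\widehat{\phi}(\lambda)\}$ is a Parseval frame in $L^2(\mathbb{R}^d)$. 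Statement (2) then follows immediately: by Lemma \ref{gaborsystem} this system is (up to the scalar normalization) a multivariate Gabor system $\mathcal{G}(F(\lambda),\Lambda(\lambda))$ with $\Lambda(\lambda) = A(b)\mathbb{Z}^d\times B(\lambda)D(q)\mathbb{Z}^d$ a separable full rank lattice, so the Density Condition (Lemma \ref{density}, equivalence of (1) and (2)) gives $\mathrm{Vol}(\Lambda(\lambda)) = |\det A(b)\,\det(B(\lambda)D(q))| = \det|A(b)B(\lambda)D(q)| \leq 1$.

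The main obstacle I anticipate is the careful bookkeeping in the decoupling step: one must justify interchanging the sum over $m$ with the integral over $\mathbf{I}$, verify that the "arbitrary field" argument is legitimate (i.e. that the Parseval identity for all $f\in\mathcal{H}$ really does localize to an a.e.-pointwise statement — this is the standard direct-integral / measurable-selection argument, but it needs the separability of $L^2(\mathbb{R}^d)$ and a countable dense set of test fields), and track the Jacobian factor $|\det B(\lambda)|$ coming from $d\mu$ so that it is split as $|\det B(\lambda)|^{1/2}\cdot|\det B(\lambda)|^{1/2}$ symmetrically between the two copies of the inner product. Once the normalization constant $\prod_k\sqrt{a_k}\,|\det B(\lambda)|^{1/2}$ is pinned down correctly, both conclusions are essentially immediate consequences of the classical Gabor density theorem quoted as Lemma \ref{density}.
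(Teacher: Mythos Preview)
Your proposal is correct and follows essentially the same route as the paper: push the Parseval identity through the Plancherel transform, peel off the central characters using the Parseval/orthonormal basis property of $\{\prod_k a_k^{-1/2}e^{2\pi i m_k\lambda_k/a_k}\}$ on $L^2(\mathbf{I})$, localize to an a.e.\ pointwise Parseval identity via a countable dense set of test fields, and then read off the volume bound from the Gabor density condition. The only slip is in your displayed equation, where the integral over $\mathbf{I}$ should start \emph{inside} the modulus (it is the $L^2(N)$ inner product) and the sum over $\gamma_{q,b}$ should be \emph{outside} it; your subsequent prose handles both correctly, so this is purely notational.
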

\begin{proof}
Given any function $\psi\in\mathcal{P}^{-1}\left(  \mathbf{F}\right)  ,$ we
have $\sum_{\gamma_{a,q,b}}\left\vert \left\langle \psi,L\left(  \gamma
_{a,q,b}\right)  \phi\right\rangle \right\vert ^{2}=\left\Vert \psi\right\Vert
_{L^{2}\left(  N\right)  }^{2}.$ We use the operator $\symbol{94}$ instead of
$\mathcal{P}$ and we define $\widehat{L}=\mathcal{P}L\mathcal{P}^{-1}.$ 
\begin{align}
\label{frame}
\sum_{\gamma_{a,q,b}}\left\vert \left\langle \psi,L\left(  \gamma
_{a,q,b}\right)  \phi\right\rangle_{L^2(N)} \right\vert ^{2}  &  =\sum_{\gamma_{a,q,b}}\left\vert \int_{\mathbf{I}}\left\langle \widehat{\psi}\left(
\lambda\right)  ,\widehat{L}\left(  \gamma_{a,q,b}\right)  \widehat{\phi
}\left(  \lambda\right)  \right\rangle_{\mathcal{HS}} d\mu\left(  \lambda\right)  \right\vert
^{2}\\
&  =\sum_{\gamma_{a,q,b}}\left\vert \int_{\mathbf{I}}\left\langle
\widehat{\psi}\left(  \lambda\right)  ,\pi_{\lambda}\left(  \gamma
_{a,q,b}\right)  \widehat{\phi}\left(  \lambda\right)  \right\rangle_{\mathcal{HS}}
d\mu\left(  \lambda\right)  \right\vert ^{2}.
\end{align}
Using the fact that in $L^{2}\left(  \mathbf{I}\right)  ,$ $$\left\{  \prod_{k=1}^{n-2d}\frac
{e^{2\pi i\left\langle m_k,\lambda_k\right\rangle }}{\sqrt{a_k}}:m_k\in\mathbb{Z},(\lambda_1,\cdots,\lambda_{n-2d},0,\cdots,0)\in\mathbf{I}\right\}  $$ forms a Parseval frame in $L^2(\textbf{I})$, we let $r\left(
\lambda\right)  =\left\vert \mathrm{\det}\left( B\left(
\lambda\right)  \right)  \right\vert,$ and put $$c_{\gamma_{q,b}}(\lambda)=\left(\prod_{k}^{n-2d}\sqrt{a_k}\right)\left\langle
\widehat{\psi}\left(  \lambda\right)  ,\pi_{\lambda}\left(  \gamma_{q,b}\right)
\widehat{\phi}\left(  \lambda\right)  \right\rangle_{\mathcal{HS}} r\left(  \lambda\right).$$
Equation (\ref{frame}) becomes,
\begin{align*}
\sum_{\gamma_{a,q,b}}\left\vert \left\langle \psi,L\left(  \gamma_{a,q,b}\right)
\phi\right\rangle_{L^2(N)} \right\vert ^{2}  &  =\sum_{\gamma_{q,b}}\sum_{m\in\mathbb{Z}^{d}}\left\vert \int_{\mathbf{I}}{\displaystyle\prod\limits_{k=1}^{n-2d}}e^{2\pi i\lambda_k\frac{m_{k}}{a_k}}\left\langle \widehat{\psi}\left(\lambda\right)  ,\pi_{\lambda}\left(  \gamma_{q,b}\right)  \widehat{\phi}\left(\lambda\right)  \right\rangle_{\mathcal{HS}} d\mu\left(  \lambda\right)  \right\vert ^{2}\\
&  =\sum_{\gamma_{q,b}}\sum_{m\in\mathbb{Z}^{d}}\left\vert \int_{\mathbf{I}}{\displaystyle\prod\limits_{k=1}^{n-2d}}\frac{e^{2\pi i\lambda_k\frac{m_{k}}{a_k}}}{\sqrt{a_k}}c_{\gamma_{q,b}}(\lambda)
d\lambda\right\vert ^{2}.\\
\end{align*} Since $c_{\gamma_{q,b}}$ is an element of $L^2\left(\mathbf{I}\right)$, and because $\left\{  \prod_{k=1}^{n-2d}\frac
{e^{2\pi i\left\langle m_k,\cdot\right\rangle }}{\sqrt{a_k}}:m_k\in\mathbb{Z}\right\}  $ forms a Parseval frame,
\begin{equation} \label{last}\sum_{\gamma_{a,q,b}}\left\vert \left\langle \psi,L\left(  \gamma_{a,q,b}\right)
\phi\right\rangle_{L^2(N)}\right\vert ^{2} = \sum_{\gamma_{q,b}} \|c_{\gamma_{q,b} }\|^2.\end{equation}
Next, put $\mathbf{a}=\prod_{k=1}^{n-2d}\sqrt{a_k}.$ Then (\ref{last}) yields
\begin{align*}
\sum_{\gamma_{a,q,b}}\left\vert \left\langle \psi,L\left(  \gamma_{a,q,b}\right)
\phi\right\rangle_{L^2(N)} \right\vert ^{2}&  =\sum_{\gamma_{q,b}}\int_{\mathbf{I}}\left\vert \mathbf{a}\left\langle
\widehat{\psi}\left(  \lambda\right)  ,\pi_{\lambda}\left(  \gamma_{q,b}\right)
\widehat{\phi}\left(  \lambda\right)  \right\rangle_{\mathcal{HS}} r\left(  \lambda\right)  \right\vert ^{2}d\lambda\\
&  =\int_{\mathbf{I}}\sum_{\gamma_{q,b}}\left\vert  \mathbf{a}\left\langle \widehat{\psi}\left(  \lambda\right)  ,\pi_{\lambda}\left(
\gamma_{q,b}\right)  \widehat{\phi}\left(  \lambda\right)  \right\rangle_{\mathcal{HS}}
\sqrt{r\left(  \lambda\right)  }\right\vert ^{2}r\left(  \lambda\right)
d\lambda\\
&  =\int_{\mathbf{I}}\sum_{\gamma_{q,b}}\left\vert \mathbf{a}\left\langle \widehat{\psi}\left(  \lambda\right)  ,\pi_{\lambda}\left(
\gamma_{q,b}\right)  \widehat{\phi}\left(  \lambda\right)  \right\rangle_{\mathcal{HS}}
\left\vert \mathrm{\det}\left( B\left(  \lambda\right)  \right)
\right\vert ^{1/2}\right\vert ^{2}d\mu\left(  \lambda\right)  .
\end{align*}
Due to the assumption that $L\left(\Gamma_{a,q,b}\right)\phi$ is a Parseval frame, we also have $$ \sum_{\gamma_{a,q,b}}\left\vert \left\langle \psi,L\left(  \gamma_{a,q,b}\right)
\phi\right\rangle_{L^2(N)} \right\vert ^{2}=\int_{\mathbf{I}}\left\Vert \widehat{\psi}\left(  \lambda\right)
\right\Vert_{\mathcal{HS}} ^{2}d\mu\left(  \lambda\right).$$
Thus, 
\begin{eqnarray*}\int_{\mathbf{I}}\left(\sum_{\gamma_{q,b}}\left\vert \mathbf{a}\left\langle \widehat{\psi}\left(  \lambda\right)  ,\pi_{\lambda}\left(
\gamma_{q,b}\right)  \widehat{\phi}\left(  \lambda\right)  \right\rangle_{\mathcal{HS}}
\left\vert \mathrm{\det}\left(  B\left(  \lambda\right)  \right)
\right\vert ^{1/2}\right\vert ^{2}-\left\Vert \widehat{\psi}\left(  \lambda\right)
\right\Vert_{\mathcal{HS}} ^{2}\right)d\mu\left(  \lambda\right) =0.\end{eqnarray*}
So, for $\mu$-a.e., $\lambda\in\mathbf{I},$
\begin{equation}
\sum_{\gamma_{q,b}}\left\vert \left\langle \widehat{\psi}\left(
\lambda\right)  ,\mathbf{a}\left\vert \mathrm{\det}\left(  B\left(
\lambda\right)  \right)  \right\vert ^{1/2}\pi_{\lambda}\left(\gamma_{q,b}\right)  \widehat{\phi}\left(  \lambda\right)  \right\rangle_{\mathcal{HS}} \right\vert
^{2}=\left\Vert \widehat{\psi}\left(  \lambda\right)  \right\Vert_{\mathcal{HS}} ^{2}. \label{final} \end{equation} 
However, we want to make sure that equality \ref{final} holds for all functions in  a dense subset of $\mathcal{H}$. For that purpose, we pick a countable dense set $Q\subset\mathcal{H}$ such that the set $\{\widehat{f}(\lambda):f\in Q\}$ is dense in $L^2(\mathbb{R}^d)\otimes\mathbf{u}$ for almost every $\lambda\in \mathbf{I}.$ For each $f\in Q$, equality \ref{final} holds on $\mathbf{I}-N_f$ where $N_f$ is a null set dependent on the function $f$. Thus, for all functions in $Q$ equality \ref{final} is true for all $\lambda\in\mathbf{I}-\bigcup_{f\in Q}\left( N_f\right)$. Finally, the map $$\widehat{\psi}(\lambda)\mapsto \left\langle \widehat{\psi}\left(  \lambda\right)  ,\pi_{\lambda}\left(
\gamma_{q,b}\right) \left\vert \mathrm{\det}\left( B\left(  \lambda\right)  \right)\right\vert^{1/2}\sqrt{a_1 \cdots a_{n-2d}}\:  \widehat{\phi}\left(  \lambda\right)  \right\rangle_{\mathcal{HS}}
$$
defines an isometry on a dense subset of $L^2(\mathbb{R}^d)\otimes\mathbf{u}$ almost everywhere, completing the first part of the proposition. 
 Next, the second part of the proposition is simply true by the density condition of Gabor systems yielding to Parseval frames. See Lemma 3.2 in \cite{Han Yang Wang}.
\end{proof}

\begin{lemma}
For any fixed $\lambda\in\Sigma,$ for our class of groups, $\left\vert \det B\left(
\lambda\right)  \right\vert =\left(\det V\left(  \lambda\right) \right)^{1/2}.$
\end{lemma}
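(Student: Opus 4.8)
The plan is to compute $\det V(\lambda)$ directly from the block structure that the commutativity hypotheses force on it, and then to recognize the resulting $d\times d$ block as exactly the matrix $B(\lambda)$. First I would recall that, by the cross-section lemma above, the jump indices for the minimal layer are $\mathbf{e}=\{n-2d+1,\dots,n\}$, and that in our fixed Jordan--H\"older basis these indices label precisely the vectors $Y_1,\dots,Y_d,X_1,\dots,X_d$ (the center $\mathfrak{z}(\mathfrak{n})$ occupying indices $1,\dots,n-2d$). Hence $V(\lambda)=[\lambda[Z_i,Z_j]]_{i,j\in\mathbf{e}}$ is the $2d\times 2d$ skew-symmetric matrix assembled from the brackets among the $Y$'s and $X$'s. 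Since $\mathfrak{p}=\mathfrak{z}(\mathfrak{n})\oplus\mathbb{R}Y_1\oplus\cdots\oplus\mathbb{R}Y_d$ is abelian we have $[Y_i,Y_j]=0$ for all $i,j$, and since $\mathfrak{m}=\mathbb{R}X_1\oplus\cdots\oplus\mathbb{R}X_d$ is abelian we have $[X_i,X_j]=0$ for all $i,j$. Writing $V(\lambda)$ in $d\times d$ blocks with the $Y$-directions first and the $X$-directions second therefore yields
\[
V(\lambda)=\begin{pmatrix} 0 & C(\lambda)\\ -C(\lambda)^{tr} & 0\end{pmatrix},\qquad C(\lambda)_{ij}=\lambda[Y_i,X_j].
\]

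The second step is to evaluate this block determinant. Factoring $\begin{pmatrix}0 & C\\ -C^{tr} & 0\end{pmatrix}=\begin{pmatrix}0 & I_d\\ I_d & 0\end{pmatrix}\begin{pmatrix}-C^{tr} & 0\\ 0 & C\end{pmatrix}$ and using multiplicativity of $\det$ together with $\det\begin{pmatrix}0 & I_d\\ I_d & 0\end{pmatrix}=(-1)^d$, I get $\det V(\lambda)=(-1)^d\det(-C^{tr})\det(C)=(-1)^d(-1)^d\det(C)^2=\det\bigl(C(\lambda)\bigr)^2\geq 0$, so that $\bigl(\det V(\lambda)\bigr)^{1/2}=\lvert\det C(\lambda)\rvert$ is meaningful. (Alternatively one may simply invoke that the determinant of a skew-symmetric matrix is the square of its Pfaffian.) Finally, comparing with the definition of $B(\lambda)$, whose $(i,j)$ entry is $-\lambda[X_j,Y_i]=\lambda[Y_i,X_j]$, one sees $B(\lambda)=C(\lambda)$ exactly; hence $\lvert\det B(\lambda)\rvert=\lvert\det C(\lambda)\rvert=\bigl(\det V(\lambda)\bigr)^{1/2}$, which is the claim.

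There is no genuinely hard step here; the only point requiring care is the bookkeeping, namely that the jump-index set $\mathbf{e}$ indeed singles out the $Y$- and $X$-directions (so that the vanishing of $[Y_i,Y_j]$ and $[X_i,X_j]$ really produces the two diagonal zero blocks) and that the sign-and-transpose convention in the definition of $B(\lambda)$ lines up with $C(\lambda)$. Even if one worried about the latter, only $\lvert\det B(\lambda)\rvert$ enters the conclusion, so an overall sign or an accidental transpose would be harmless. One may also add the remark that for $\lambda\in\Omega$ (in particular for $\lambda\in\Sigma$) the layer condition $\det M_{\mathbf{e}}(\lambda)\neq0$ is exactly $\det V(\lambda)\neq0$, so the square root is taken of a strictly positive number.
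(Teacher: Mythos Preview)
Your proof is correct and follows essentially the same route as the paper: identify the $2\times 2$ block structure of $V(\lambda)$ coming from the commutativity of $\mathfrak{p}$ and of $\mathfrak{m}$, recognize the off-diagonal block as $B(\lambda)$, and read off $\det V(\lambda)=\det B(\lambda)^2$. If anything you are more careful than the paper, which writes the lower-left block as $-B(\lambda)$ rather than $-B(\lambda)^{tr}$; your observation that only $\lvert\det B(\lambda)\rvert$ enters makes this discrepancy harmless.
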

\begin{proof}
For a fix $\lambda\in\Sigma,$ we recall the definition of the corresponding matrix $V(\lambda)$ given in (\ref{matrixV}). Some simple computations show that 
\[
V\left(  \lambda\right) =\left(
\begin{array}
[c]{cc}
\mathbf{0} & B\left(  \lambda\right)  \\
-B\left(  \lambda\right)   & \mathbf{0}
\end{array}
\right)  .
\]
$\det V\left(  \lambda\right)  =\det B\left(  \lambda\right)  ^{2}$ which is non-zero
since $V\left(  \lambda\right)  $ is a non singular matrix of rank $2d.$ It
follows that $\left\vert \det B\left(  \lambda\right)  \right\vert =\left(\det V\left(  \lambda\right) \right)^{1/2}.$
\end{proof}

Now, we are in good position to start making progress toward the answer of the first question.  
\begin{definition}
Let $r\left(
\lambda\right)  =\left\vert \det B\left(  \lambda\right)  \right\vert $ and $a=(a_1,\cdots,a_{n-2d})$, we define
\begin{equation}
\mathbf{s}=\sup_{\lambda\in
\mathbf{I}}\left\{  r\left(  \lambda\right) \right\}. \label{superieur}
\end{equation}
Notice that $\mathbf{s}$ is always defined since $\mathbf{I}$ is bounded.
\end{definition}

\begin{lemma} \label{g1} For $\mu$ a.e. $\lambda \in \mathbf{I}$, there exits some $b=(b_1,\cdots,b_d)$ and $q=(q_1,\cdots,q_d)$ such that $\mathrm{vol}\left(  A(b)\mathbb{Z}^{d}\times B\left(  \lambda\right)D(q)\mathbb{Z}^{d}\right)\leq1$.
\end{lemma}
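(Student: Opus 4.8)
The plan is to read the volume off directly from Lemma~\ref{gaborsystem} and then exploit that $\mathbf{I}$ is bounded so that a single, $\lambda$-independent choice of the dilation parameters works. Since $A(b)$ and $D(q)$ are the diagonal matrices with entries $1/b_i$ and $1/q_i$, one has
\[
\mathrm{vol}\left(A(b)\mathbb{Z}^{d}\times B(\lambda)D(q)\mathbb{Z}^{d}\right)=\left\vert\det A(b)\right\vert\cdot\left\vert\det\left(B(\lambda)D(q)\right)\right\vert=\frac{\left\vert\det B(\lambda)\right\vert}{(b_1\cdots b_d)(q_1\cdots q_d)},
\]
which is exactly the expression recorded in Lemma~\ref{gaborsystem}. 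So the claim amounts to choosing $b,q$ with strictly positive entries so that $(b_1\cdots b_d)(q_1\cdots q_d)\geq\left\vert\det B(\lambda)\right\vert$.

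First I would note that $\lambda\mapsto\det B(\lambda)$ is a polynomial in the coordinates of $\lambda$, hence continuous, while $\mathbf{I}$ is bounded; therefore $\mathbf{s}=\sup_{\lambda\in\mathbf{I}}\left\vert\det B(\lambda)\right\vert<\infty$, the quantity already introduced in~(\ref{superieur}). Then I would simply take $b$ and $q$ with all entries equal to $\max\{1,\mathbf{s}^{1/(2d)}\}$, so that $(b_1\cdots b_d)(q_1\cdots q_d)\geq\max\{1,\mathbf{s}\}$. With this choice, for \emph{every} $\lambda\in\mathbf{I}$ (in particular $\mu$ a.e.),
\[
\mathrm{vol}\left(A(b)\mathbb{Z}^{d}\times B(\lambda)D(q)\mathbb{Z}^{d}\right)=\frac{\left\vert\det B(\lambda)\right\vert}{(b_1\cdots b_d)(q_1\cdots q_d)}\leq\frac{\mathbf{s}}{\max\{1,\mathbf{s}\}}\leq1,
\]
which is the assertion.

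There is no real obstacle: once the volume formula of Lemma~\ref{gaborsystem} is in hand the statement is immediate, the only content being that boundedness of $\mathbf{I}$ forces $\mathbf{s}<\infty$ so a uniform choice of $b,q$ suffices. The only mild subtlety is the degenerate case $\mathbf{s}=0$, which in fact never occurs since $\det B(\lambda)\neq0$ on $\mathbf{I}\subseteq\Sigma$ (by the cross-section lemma together with $\left\vert\det B(\lambda)\right\vert=(\det V(\lambda))^{1/2}$), but even then any positive $b,q$ would do.
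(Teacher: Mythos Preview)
Your proof is correct and follows essentially the same route as the paper: compute the volume via the formula from Lemma~\ref{gaborsystem} and then use the finiteness of $\mathbf{s}$ on the bounded set $\mathbf{I}$ to make a uniform choice of $b$ and $q$. The paper takes $b_i=\mathbf{s}^{1/d}$ and any $q$ with $q_1\cdots q_d\geq 1$, whereas you distribute the weight as $b_i=q_i=\max\{1,\mathbf{s}^{1/(2d)}\}$; this is only a cosmetic difference.
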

\begin{proof}
It suffices to pick $b=b(\mathbf{s})=\left(\mathbf{s}^{1/d},\cdots ,\mathbf{s}^{1/d}\right) $ and $q=(q_1,...,q_d)$ such that $$\dfrac{1}{q_1\cdots q_d}\leq 1.$$
\end{proof}

\begin{lemma}
For $\mu$-a.e $\lambda\in\mathbf{I},$ if $q$ is chosen such that $\frac{1}{q_1\cdots q_d}\leq 1,$
 there exists $g\left(\lambda\right)  \in$
$L^{2}\left(\mathbb{R}^{d}\right)$ such that the Gabor system $\mathcal{G}\left(  g\left(\lambda\right)  ,A\left( b(\mathbf{s})\right) \mathbb{Z}^{d}\times B\left(  \lambda\right)D(q)\mathbb{Z}^{d}\right) $ forms an Parseval frame. Furthermore,
$$
\left\Vert g\left(  \lambda\right)  \right\Vert ^{2}=\left\vert \det A\left(
b(\mathbf{s})\right)  \det B\left(  \lambda\right)  \det D(q)\right\vert .
$$
\end{lemma}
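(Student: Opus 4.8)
The plan is to derive this lemma as a fibrewise corollary of the Density Condition (Lemma \ref{density}) and the norm identity of Lemma \ref{ONB}, applied to the separable lattice $\Lambda(\lambda)=A(b(\mathbf{s}))\mathbb{Z}^{d}\times B(\lambda)D(q)\mathbb{Z}^{d}$. So the whole argument is a short assembly of already-established facts, plus the volume bookkeeping contained in Lemma \ref{g1}.

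First I would check that the quoted Gabor-frame lemmas actually apply, i.e. that $\Lambda(\lambda)$ is a separable full rank lattice in $\mathbb{R}^{2d}$ for $\mu$-a.e. $\lambda\in\mathbf{I}$. Indeed, for every $\lambda\in\mathbf{I}\subseteq\Omega$ one has $\det V(\lambda)=\det B(\lambda)^{2}\neq 0$, so $B(\lambda)$ is nonsingular; since $A(b(\mathbf{s}))$ and $D(q)$ are diagonal with strictly positive entries, $B(\lambda)D(q)$ is nonsingular, and hence $\Lambda(\lambda)$ is separable and of full rank. Next, with $b=b(\mathbf{s})=(\mathbf{s}^{1/d},\dots,\mathbf{s}^{1/d})$ and $q$ chosen so that $\tfrac{1}{q_{1}\cdots q_{d}}\leq 1$, Lemma \ref{g1} gives $\mathrm{vol}(\Lambda(\lambda))\leq 1$; explicitly $\mathrm{vol}(\Lambda(\lambda))=|\det A(b(\mathbf{s}))|\,|\det B(\lambda)|\,|\det D(q)|=\tfrac{r(\lambda)}{\mathbf{s}}\cdot\tfrac{1}{q_{1}\cdots q_{d}}\leq 1$, using $r(\lambda)\leq\mathbf{s}$ from \eqref{superieur}. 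Applying the implication $(2)\Rightarrow(1)$ of Lemma \ref{density} then produces $g(\lambda)\in L^{2}(\mathbb{R}^{d})$ for which $\mathcal{G}\bigl(g(\lambda),A(b(\mathbf{s}))\mathbb{Z}^{d}\times B(\lambda)D(q)\mathbb{Z}^{d}\bigr)$ is a Parseval frame, and the second assertion of Lemma \ref{ONB} yields $\|g(\lambda)\|^{2}=\mathrm{vol}(\Lambda(\lambda))=|\det A(b(\mathbf{s}))\det B(\lambda)\det D(q)|$, which is the claimed norm formula.

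The only genuine point of care is the first step — confirming that $\Lambda(\lambda)$ is a bona fide separable full rank lattice so that Lemmas \ref{density} and \ref{ONB} are legitimately invoked — after which everything reduces to tracking the three diagonal and skew determinants. I would deliberately not address measurability of $\lambda\mapsto g(\lambda)$ at this stage, since the statement is purely pointwise almost everywhere; a measurable selection is an additional matter, needed only later when these fibre generators are to be glued into a single element of $L^{2}(N)$.
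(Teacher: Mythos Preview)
Your proof is correct and follows essentially the same route as the paper: invoke Lemma \ref{g1} to secure $\mathrm{vol}(\Lambda(\lambda))\leq 1$, then apply the density condition (Lemma \ref{density}, i.e.\ Theorem 3.3 of \cite{Han Yang Wang}) to obtain $g(\lambda)$, with the norm identity coming from Lemma \ref{ONB}. Your version is in fact slightly more detailed than the paper's, since you explicitly verify that $\Lambda(\lambda)$ is separable and of full rank before invoking those lemmas.
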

\begin{proof}
By Theorem 3.3 in \cite{Han Yang Wang} and Lemma \ref{g1}, the density
condition stated also in Lemma \ref{density} implies the existence of the function $g\left(  \lambda\right)  $
for $\mu$-a.e. $\lambda\in\mathbf{I}$.
\end{proof}

\begin{lemma}
Let $\mathbf{u}$ be a unit norm vector in $L^{2}\left(\mathbb{R}^{d}\right).$ If there exists some vector $\eta$ such that $\left\{  L\left(\gamma_{a,q,b}\right)  \eta\right\}_{\gamma_{a,q,b}\in\Gamma_{a,q,b}}$ forms a Parseval frame in
$\mathcal{H}=\mathcal{P}^{-1}\left(  \int_{\mathbf{I}}^{\oplus}\left(
L^{2}\left(\mathbb{R}^{d}\right)  \otimes\mathbf{u}\right)  d\mu\left(  \lambda\right)  \right)  $ then
$\mu\left(  \mathbf{I}\right)  \leq\left(  q_{1}\cdots q_{d}\right)  \left(
b_{1}\cdots b_{d}\right) \left(  a_{1}\cdots a_{n-2d}\right) .$
\end{lemma}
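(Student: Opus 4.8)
The plan is to push the hypothesis through Proposition \ref{main prop}, use the rigidity of the generator norm of a Gabor Parseval frame (Lemma \ref{ONB}), and then cash in the trivial fact that the identity $e\in N$ lies in $\Gamma_{a,q,b}$.

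First I would observe that $\eta\in\mathcal{H}$: taking all the integers $m_j,k_j,n_j$ equal to $0$ shows $e\in\Gamma_{a,q,b}$, so $L(e)\eta=\eta$ is one of the frame vectors and hence lies in $\mathcal{H}$. If $\eta=0$ then $\mathcal{H}=\{0\}$, $\mu(\mathbf{I})=0$, and there is nothing to prove; assume henceforth $\eta\neq0$. Write $\widehat\eta(\lambda)=F_\eta(\lambda)\otimes\mathbf{u}$ with $F_\eta(\lambda)\in L^2(\mathbb{R}^d)$, so that $\|\widehat\eta(\lambda)\|_{\mathcal{HS}}^2=\|F_\eta(\lambda)\|^2$ because $\mathbf{u}$ has unit norm.

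Next I would apply Proposition \ref{main prop}(1) with $\phi=\eta$: for $\mu$-a.e. $\lambda\in\mathbf{I}$, the system $\bigl\{\bigl(\prod_{k=1}^{n-2d}\sqrt{a_k}\bigr)|\det B(\lambda)|^{1/2}\pi_\lambda(\gamma_{q,b})\widehat\eta(\lambda):\gamma_{q,b}\in\Gamma_{q,b}\bigr\}$ is a Parseval frame in $L^2(\mathbb{R}^d)\otimes\mathbf{u}\simeq L^2(\mathbb{R}^d)$. By Lemma \ref{gaborsystem} and this identification, it is exactly the Gabor system $\mathcal{G}(g(\lambda),\Lambda(\lambda))$ with
\[
g(\lambda)=\Bigl(\prod_{k=1}^{n-2d}\sqrt{a_k}\Bigr)|\det B(\lambda)|^{1/2}F_\eta(\lambda),\qquad \Lambda(\lambda)=A(b)\mathbb{Z}^d\times B(\lambda)D(q)\mathbb{Z}^d,
\]
a separable lattice which is full rank because $A(b),D(q)$ are invertible diagonal matrices and $|\det B(\lambda)|=(\det V(\lambda))^{1/2}\neq0$ on $\mathbf{I}\subseteq\Sigma$. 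By Lemma \ref{ONB}, a Gabor Parseval frame forces $\|g(\lambda)\|^2=\mathrm{vol}(\Lambda(\lambda))$. Since $\mathrm{vol}(\Lambda(\lambda))=|\det A(b)\det B(\lambda)\det D(q)|=|\det B(\lambda)|/\bigl((b_1\cdots b_d)(q_1\cdots q_d)\bigr)$, while $\|g(\lambda)\|^2=(a_1\cdots a_{n-2d})|\det B(\lambda)|\,\|F_\eta(\lambda)\|^2$, cancelling the nonzero factor $|\det B(\lambda)|$ gives, for $\mu$-a.e. $\lambda\in\mathbf{I}$,
\[
\|F_\eta(\lambda)\|^2=\frac{1}{(a_1\cdots a_{n-2d})(b_1\cdots b_d)(q_1\cdots q_d)}.
\]

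Finally I would integrate via Plancherel and invoke the frame inequality. One has $\|\eta\|^2=\int_{\mathbf{I}}\|\widehat\eta(\lambda)\|_{\mathcal{HS}}^2\,d\mu(\lambda)=\int_{\mathbf{I}}\|F_\eta(\lambda)\|^2\,d\mu(\lambda)=\mu(\mathbf{I})/\bigl((a_1\cdots a_{n-2d})(b_1\cdots b_d)(q_1\cdots q_d)\bigr)$. On the other hand, testing the Parseval frame identity on the vector $\eta$ itself and keeping only the $\gamma=e$ summand yields $\|\eta\|^2=\sum_{\gamma_{a,q,b}}|\langle\eta,L(\gamma_{a,q,b})\eta\rangle|^2\geq|\langle\eta,\eta\rangle|^2=\|\eta\|^4$, hence $\|\eta\|^2\leq1$. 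Combining the two displays gives $\mu(\mathbf{I})\leq(q_1\cdots q_d)(b_1\cdots b_d)(a_1\cdots a_{n-2d})$, as claimed. The only points needing care are tracking the scalar normalization through the isomorphism $L^2(\mathbb{R}^d)\otimes\mathbf{u}\simeq L^2(\mathbb{R}^d)$ and checking that $\Lambda(\lambda)$ is genuinely full rank so that Lemma \ref{ONB} applies; neither is a serious obstacle.
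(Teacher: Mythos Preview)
Your proof is correct and follows essentially the same route as the paper: apply Proposition~\ref{main prop} to obtain a fiberwise Gabor Parseval frame, use Lemma~\ref{ONB} to compute $\|\widehat\eta(\lambda)\|_{\mathcal{HS}}^2=\bigl((a_1\cdots a_{n-2d})(b_1\cdots b_d)(q_1\cdots q_d)\bigr)^{-1}$, integrate via Plancherel to get $\|\eta\|^2=\mu(\mathbf{I})/\bigl((a_1\cdots a_{n-2d})(b_1\cdots b_d)(q_1\cdots q_d)\bigr)$, and finish with $\|\eta\|^2\leq 1$. Your argument is in fact slightly more explicit than the paper's in two places (checking $\eta\in\mathcal{H}$ via $e\in\Gamma_{a,q,b}$, and deriving $\|\eta\|^2\leq 1$ by isolating the identity summand in the Parseval identity), but these are just elaborations of what the paper takes as understood.
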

\begin{proof}
Put $\mathbf{a}={\displaystyle\prod\limits_{k=1}^{n-2d}}\sqrt{a_{k}}.$ Under the assumptions that there exists some quasi-lattice
$\Gamma_{a,q,b}$ and some function $\eta$ such that $\left\{  L\left(
\gamma_{a,q,b}\right)  \eta\right\}  _{\gamma_{a,q,b}\in\Gamma_{a,q,b}}$ forms
a Parseval frame, $\sqrt{\det B\left(  \lambda\right)  }\mathbf{a}$
$\left(  \mathcal{P}\eta\right)  \left(  \lambda\right)  $ forms a Parseval frame in $L^{2}\left(\mathbb{R}^{d}\right)\otimes\mathbf{u}  $ for $\mu$-a.e $\lambda\in\mathbf{I.}$ Thus,
\[
\left\Vert \left(  \mathcal{P}\eta\right)  \left(  \lambda\right)  \right\Vert_{\mathcal{HS}}
^{2}=\frac{1}{\left(  q_{1}\cdots q_{d}\right)  \left(  b_{1}\cdots
b_{d}\right)  \mathbf{a}^{2}}.
\]
Computing the norm of the vector $\eta$, we obtain
\begin{align*}
\left\Vert \eta\right\Vert_{L^2(N)} ^{2}  & =\int_{\mathbf{I}}\left\Vert \left(
\mathcal{P}\eta\right)  \left(  \lambda\right)  \right\Vert _{\mathcal{HS}}^{2}d\mu\left(
\lambda\right)  \\
& =\int_{\mathbf{I}}\frac{1}{\left(  q_{1}\cdots q_{d}\right)  \left(
b_{1}\cdots b_{d}\right)  \mathbf{a}^{2}}d\mu\left(  \lambda\right)  \\
& =\frac{\mu\left(  \mathbf{I}\right)  }{\left(  q_{1}\cdots q_{d}\right)
\left(  b_{1}\cdots b_{d}\right)\left(  a_{1}\cdots a_{n-2d}\right)
}.
\end{align*}
$L$ being a unitary representation, $\left\{  L\left(  \gamma_{a,q,b}\right)
\eta\right\}  _{\gamma_{a,q,b}\in\Gamma_{a,q,b}}$ is a Parseval frame. Thus, $\left\Vert \eta\right\Vert ^{2}\leq1$ and $
\mu\left(  \mathbf{I}\right)  \leq\left(  q_{1}\cdots q_{d}\right)  \left(
b_{1}\cdots b_{d}\right) \left(  a_{1}\cdots a_{n-2d}\right).$ \end{proof}

\begin{proposition}
\label{NTF prop}Let $\mathcal{H}$ be a closed left-invariant subspace of
$L^{2}\left(  N\right)  $ such that $\mathcal{H}=\mathcal{P}^{-1}\left(
\mathbf{F}\right) $ where $\mathbf{F}=\int_{\mathbf{I}}^{\oplus}L^2(\mathbb{R}^d)\otimes\mathbf{u}\:d\mu(\lambda)$. Let $\eta\in\mathcal{H}$ such that 
\begin{equation}\label{pf}\widehat{\eta}\left(  \lambda
\right)  =\frac{g\left(  \lambda\right)\otimes\mathbf{u}  }{\prod_{k=1}^{n-2d}\sqrt{a_k}\sqrt{\det |B\left(
\lambda\right)|}} 
\end{equation}
 and the gabor system $\mathcal{G}\left(  g\left(
\lambda\right)  ,A\left( b(\mathbf{s})\right)\mathbb{Z}^{d}\times B\left(  \lambda\right)D(q)\mathbb{Z}^{d}\right) $ forms an Parseval frame for $\mu$ a.e. $\lambda \in$ $\mathbf{I}.$ The following must hold
\begin{enumerate}
\item  $\left\{  L\left(  \gamma_{a,q,b(\mathbf{s})}\right)  \eta\right\}
_{\gamma_{a,q,b(\mathbf{s})}}$ is a Parseval frame in $\mathcal{H}$. 
\item $\left\{  L\left(  \gamma_{a,q,b(\mathbf{s})}\right)  \eta\right\}
_{\gamma_{a,q,b(\mathbf{s})}}$ is an ONB in $\mathcal{H}$ if \begin{equation} \label{basis} \mu(\mathbf{I})=\frac{\prod_{k=1}^{n-2d}(a_k)}{|\det D(q)\det A(b(\mathbf{s}))|}.\end{equation}
\end{enumerate}
\end{proposition}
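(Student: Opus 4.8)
The plan is to reverse the chain of identities in the proof of Proposition~\ref{main prop}. Fix an arbitrary $\psi\in\mathcal{H}$ and write $\widehat{\psi}(\lambda)=h(\lambda)\otimes\mathbf{u}$, while by (\ref{pf}) we have $\widehat{\eta}(\lambda)=F(\lambda)\otimes\mathbf{u}$ with $F(\lambda)=g(\lambda)/(\mathbf{a}\sqrt{\det|B(\lambda)|})$ and $\mathbf{a}=\prod_{k=1}^{n-2d}\sqrt{a_k}$. Put $r(\lambda)=|\det B(\lambda)|$, which is bounded on $\mathbf{I}$ by $\mathbf{s}$; as in Proposition~\ref{main prop} the fields $c_{\gamma_{q,b}}(\lambda):=\mathbf{a}\,\langle\widehat{\psi}(\lambda),\pi_\lambda(\gamma_{q,b})\widehat{\eta}(\lambda)\rangle_{\mathcal{HS}}\,r(\lambda)$ lie in $L^2(\mathbf{I})$. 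Running the \emph{same} computation as in Proposition~\ref{main prop} --- expanding $\langle\psi,L(\gamma_{a,q,b(\mathbf{s})})\eta\rangle$ through the Plancherel transform, peeling off the central variables with the Parseval frame $\{\prod_k e^{2\pi i\langle m_k,\cdot\rangle}/\sqrt{a_k}\}$ of $L^2(\mathbf{I})$, and using $r(\lambda)\,d\lambda=d\mu(\lambda)$ --- yields
\[
\sum_{\gamma_{a,q,b(\mathbf{s})}}\bigl|\langle\psi,L(\gamma_{a,q,b(\mathbf{s})})\eta\rangle\bigr|^{2}
=\int_{\mathbf{I}}\sum_{\gamma_{q,b}}\bigl|\mathbf{a}\sqrt{r(\lambda)}\,\langle\widehat{\psi}(\lambda),\pi_\lambda(\gamma_{q,b})\widehat{\eta}(\lambda)\rangle_{\mathcal{HS}}\bigr|^{2}\,d\mu(\lambda),
\]
the interchange of $\sum_{\gamma_{q,b}}$ and $\int_{\mathbf{I}}$ being legitimate by Tonelli's theorem since the summands are nonnegative.

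The key step is to recognize the inner sum as the Parseval identity for the Gabor frame in the hypothesis. Since $\pi_\lambda(\gamma_{q,b})$ affects only the $L^2(\mathbb{R}^d)$-factor and $\mathbf{u}$ has unit norm, and since $\mathbf{a}\sqrt{r(\lambda)}\,F(\lambda)=g(\lambda)$, we get $\mathbf{a}\sqrt{r(\lambda)}\,\langle\widehat{\psi}(\lambda),\pi_\lambda(\gamma_{q,b})\widehat{\eta}(\lambda)\rangle_{\mathcal{HS}}=\langle h(\lambda),\pi_\lambda(\gamma_{q,b})g(\lambda)\rangle_{L^2(\mathbb{R}^d)}$. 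By the same computation as in the proof of Lemma~\ref{gaborsystem}, $\{\pi_\lambda(\gamma_{q,b})g(\lambda)\}_{\gamma_{q,b}}$ is exactly the Gabor system $\mathcal{G}\bigl(g(\lambda),A(b(\mathbf{s}))\mathbb{Z}^d\times B(\lambda)D(q)\mathbb{Z}^d\bigr)$, which by assumption is a Parseval frame in $L^2(\mathbb{R}^d)$ for $\mu$-a.e.\ $\lambda\in\mathbf{I}$. Hence for a.e.\ $\lambda$ the inner sum equals $\|h(\lambda)\|^{2}=\|\widehat{\psi}(\lambda)\|_{\mathcal{HS}}^{2}$, and integrating gives $\sum_{\gamma}|\langle\psi,L(\gamma_{a,q,b(\mathbf{s})})\eta\rangle|^{2}=\int_{\mathbf{I}}\|\widehat{\psi}(\lambda)\|_{\mathcal{HS}}^{2}\,d\mu(\lambda)=\|\psi\|^{2}$. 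This proves part (1), completeness of the system being automatic for a Parseval frame.

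For part (2), recall that a Parseval frame all of whose elements have unit norm is an orthonormal basis, and that $L$ is unitary, so $\|L(\gamma_{a,q,b(\mathbf{s})})\eta\|=\|\eta\|$ for every $\gamma$; thus the system is an ONB precisely when $\|\eta\|=1$. Using the norm formula $\|g(\lambda)\|^{2}=|\det A(b(\mathbf{s}))\det B(\lambda)\det D(q)|$ established just before the proposition, one computes $\|\widehat{\eta}(\lambda)\|_{\mathcal{HS}}^{2}=\|g(\lambda)\|^{2}/(\mathbf{a}^{2}\det|B(\lambda)|)=|\det A(b(\mathbf{s}))\det D(q)|/\mathbf{a}^{2}$, which is independent of $\lambda$. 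Therefore $\|\eta\|^{2}=\int_{\mathbf{I}}\|\widehat{\eta}(\lambda)\|_{\mathcal{HS}}^{2}\,d\mu(\lambda)=\mu(\mathbf{I})\,|\det A(b(\mathbf{s}))\det D(q)|/\prod_{k=1}^{n-2d}a_k$, and this equals $1$ if and only if $\mu(\mathbf{I})=\prod_{k=1}^{n-2d}(a_k)/|\det D(q)\det A(b(\mathbf{s}))|$, i.e.\ precisely condition~(\ref{basis}).

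The only genuinely delicate point, just as in Proposition~\ref{main prop}, is the bookkeeping needed to carry out the first displayed reduction for every $\psi\in\mathcal{H}$: measurability of the fields $h(\lambda),F(\lambda)$, square-integrability of $c_{\gamma_{q,b}}$ over $\mathbf{I}$, and applicability of the exponential Parseval frame. Here, however, no density argument in $\psi$ is required, because the Gabor Parseval frame identity holds for \emph{all} $h(\lambda)\in L^{2}(\mathbb{R}^d)$ once $\lambda$ avoids the fixed exceptional null set; so the equality $\sum_{\gamma}|\langle\psi,L(\gamma_{a,q,b(\mathbf{s})})\eta\rangle|^{2}=\|\psi\|^{2}$ drops out directly once summation and integration are interchanged.
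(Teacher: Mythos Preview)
Your proof is correct and follows essentially the same route as the paper: you run the Plancherel/Parseval computation from Proposition~\ref{main prop} in reverse, let the normalizing factor $\mathbf{a}\sqrt{|\det B(\lambda)|}$ cancel so that the inner sum becomes the assumed Gabor Parseval identity, and for part~(2) compute $\|\eta\|^{2}$ via $\|g(\lambda)\|^{2}=|\det A(b(\mathbf{s}))\det B(\lambda)\det D(q)|$ exactly as the paper does. Your added remarks (Tonelli for the interchange, and why no density argument in $\psi$ is needed here) are welcome clarifications but do not change the strategy.
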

\begin{proof}
For part 1, since the density condition can be easily met for some appropriate choice of $q$, the existence of the function $g(\lambda)$ generating the Gabor system is guaranteed by Lemma \ref{density}. Assume that $\eta$ is picked as defined in (\ref{pf}). Let $\mathbf{a}=\prod_{k=1}^{n-2d}\sqrt{a_k}.$ $$\sum_{\gamma_{a,q,b(\mathbf{s})}\in\Gamma}\left\vert \left\langle
\psi,L\left(  \gamma_{a,q,b(\mathbf{s})}\right)  \eta\right\rangle_{L^2(N)}\right\vert ^{2} $$  
\begin{align*}
 &  =\int_{\mathbf{I}}\sum_{ \gamma_{q,b(\mathbf{s})}}\left\vert \left\langle \widehat{\psi}\left(  \lambda\right)  ,\pi_{\lambda
}\left(   \gamma_{q,b(\mathbf{s})}\right)  \mathbf{a}\left\vert \mathrm{\det}\left(
V\left(  \lambda\right)  \right)  \right\vert ^{1/4}\widehat{\eta}\left(
\lambda\right)  \right\rangle_{\mathcal{HS}} \right\vert ^{2}d\mu\left(  \lambda\right) \\
&  =\int_{\mathbf{I}}\sum_{ \gamma_{q,b(\mathbf{s})}}\left\vert \left\langle
\widehat{\psi}\left(  \lambda\right)  ,\frac{\mathbf{a}\left\vert \mathrm{\det
}\left(  B(\lambda)\right)  \right\vert ^{1/2}\pi_{\lambda
}\left(\gamma_{q,b(\mathbf{s})}\right)  g\left(  \lambda\right)\otimes\mathbf{u}  }{\sqrt{|\det B\left(
\lambda\right) | }\mathbf{a}}\right\rangle_{\mathcal{HS}} \right\vert ^{2}d\mu\left(
\lambda\right) \\
&  =\int_{\mathbf{I}}\sum_{ \gamma_{q,b(\mathbf{s})}}\left\vert \left\langle
\widehat{\psi}\left(  \lambda\right)  ,\pi_{\lambda}\left( \gamma_{q,b(\mathbf{s})}\right)
g\left(  \lambda\right)\otimes\mathbf{u}   \right\rangle_{\mathcal{HS}} \right\vert ^{2}d\mu\left(
\lambda\right) \\
&  =\int_{\mathbf{I}}\left\Vert \widehat{\psi}\left(  \lambda\right)
\right\Vert_{\mathcal{HS}} ^{2}d\mu\left(  \lambda\right) \\
&  =\left\Vert \psi\right\Vert_{L^2(N)} ^{2}.
\end{align*} 
In order to prove the second part 2, it suffices to check that $\left\Vert \eta\right\Vert ^{2}=1$ using the fact that
if $\mathcal{G}\left(  g\left(  \lambda\right)  ,A\left(  b\right)  \text{ }\mathbb{Z}^{d}\times B\left(  \lambda\right)D(q)\mathbb{Z}^{d}\right)$ is a Parseval frame in $L^{2}\left(\mathbb{R}^{d}\right)  $ then $\left\Vert g\left(  \lambda\right)  \right\Vert
^{2}=\left\vert \det B\left(  \lambda\right)  \det(D(q))\det A\left(  b(\mathbf{s})\right)
\right\vert .$ Finally combining the fact that $L$ is unitary and that the generator of the Parseval frame is a vector of norm $1$, we obtain ($\ref{basis} $).
\end{proof}

All of lemmas above and propositions above lead to the following theorem.
\begin{theorem}
Given $\mathcal{H}=\mathcal{P}^{-1}(\mathbf{F})$ a closed band-limited multiplicity-free left-invariant subspace of
$L^{2}\left(  N\right)$. There exits a quasi-lattice $\Gamma\subset N$
and a function $f\in\mathcal{H}$ such that $L\left(  \Gamma\right)  f$ forms a
Parseval frame in $\mathcal{H}.$ 
\end{theorem}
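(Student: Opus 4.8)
The plan is to assemble the lemmas and propositions of this section into a single construction. Since $\mathcal{H}$ is band-limited, the base set $\mathbf{I}\subseteq\Sigma$ is bounded, so $\mu(\mathbf{I})<\infty$ and $\mathbf{s}=\sup_{\lambda\in\mathbf{I}}\left\vert\det B(\lambda)\right\vert$ is finite. First I would fix a vector $a=(a_1,\dots,a_{n-2d})$ large enough that $\mathbf{I}\subseteq\{\lambda\in\Sigma:0\le\lambda_i\le a_i\}$; this yields the Parseval frame of exponentials $\{\prod_{k=1}^{n-2d}a_k^{-1/2}e^{2\pi i\langle m_k/a_k,\cdot\rangle}\}$ on $L^2(\mathbf{I})$ used in Propositions \ref{main prop} and \ref{NTF prop}. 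Next I would set $b=b(\mathbf{s})=(\mathbf{s}^{1/d},\dots,\mathbf{s}^{1/d})$ and pick $q=(q_1,\dots,q_d)$ with $1/(q_1\cdots q_d)\le 1$; by Lemma \ref{g1} this guarantees $\mathrm{vol}\bigl(A(b(\mathbf{s}))\mathbb{Z}^d\times B(\lambda)D(q)\mathbb{Z}^d\bigr)\le 1$ for $\mu$-a.e. $\lambda\in\mathbf{I}$.

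With the volume condition in hand, the density condition of Lemma \ref{density} produces, for $\mu$-a.e. $\lambda\in\mathbf{I}$, a window $g(\lambda)\in L^2(\mathbb{R}^d)$ such that $\mathcal{G}\bigl(g(\lambda),A(b(\mathbf{s}))\mathbb{Z}^d\times B(\lambda)D(q)\mathbb{Z}^d\bigr)$ is a Parseval frame, and by Lemma \ref{ONB} necessarily $\|g(\lambda)\|^2=\left\vert\det A(b(\mathbf{s}))\det B(\lambda)\det D(q)\right\vert$. I then define $\eta\in\mathcal{H}$ by the formula (\ref{pf}), namely $\widehat{\eta}(\lambda)=g(\lambda)\otimes\mathbf{u}\,/\,\bigl(\prod_{k=1}^{n-2d}\sqrt{a_k}\,\sqrt{\det|B(\lambda)|}\bigr)$. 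A short computation then shows $\|\widehat{\eta}(\lambda)\|_{\mathcal{HS}}^2=\left\vert\det A(b(\mathbf{s}))\det D(q)\right\vert\big/\prod_{k=1}^{n-2d}a_k$ is constant in $\lambda$, so $\|\eta\|_{L^2(N)}^2=\mu(\mathbf{I})\left\vert\det A(b(\mathbf{s}))\det D(q)\right\vert\big/\prod_{k=1}^{n-2d}a_k<\infty$ and $\eta$ is a genuine element of $L^2(N)$. Finally, Proposition \ref{NTF prop}(1) gives that $\{L(\gamma_{a,q,b(\mathbf{s})})\eta:\gamma_{a,q,b(\mathbf{s})}\in\Gamma_{a,q,b(\mathbf{s})}\}$ is a Parseval frame for $\mathcal{H}$, and $\Gamma_{a,q,b(\mathbf{s})}$ is a quasi-lattice by construction, so $\Gamma=\Gamma_{a,q,b(\mathbf{s})}$ and $f=\eta$ complete the proof.

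The one genuinely delicate point is measurability: I must check that the selection $\lambda\mapsto g(\lambda)$ can be realized as a measurable vector field, so that $\widehat{\eta}$ is an element of the direct integral $\int_{\mathbf{I}}^\oplus L^2(\mathbb{R}^d)\otimes\mathbf{u}\,d\mu(\lambda)$ and not merely a pointwise assignment. Since the lattices $\Lambda(\lambda)=A(b(\mathbf{s}))\mathbb{Z}^d\times B(\lambda)D(q)\mathbb{Z}^d$ depend polynomially on $\lambda$ through the entries of $B(\lambda)$, one invokes a measurable selection argument to produce such a $g$; alternatively, on the (co-null) region where $\det V(\lambda)$ is bounded below one can write down an explicit window adapted to the lattice and rescale, obtaining a measurable field by inspection. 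I expect this measurable-selection step to be essentially the only obstacle beyond bookkeeping; every other ingredient is a direct appeal to the results already established above.
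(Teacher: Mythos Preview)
Your proposal is correct and follows exactly the route the paper intends: the paper does not give a separate proof of this theorem but simply states that ``All of [the] lemmas above and propositions above lead to the following theorem,'' and your write-up assembles precisely those ingredients (Lemma \ref{g1}, Lemma \ref{density}, Lemma \ref{ONB}, and Proposition \ref{NTF prop}(1)) in the obvious order. Your attention to the measurable-selection issue for $\lambda\mapsto g(\lambda)$ is a point the paper passes over in silence, so you are being more careful than the original on that front.
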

The second question is concerned with finding some necessary conditions for the existence of a single Parseval frame generator for any arbitrary band-limited subspace of $L^2(N)$. For such purpose, we will now consider all of the left-invariant closed subspaces of $L^2(N)$. Let $\mathcal{K}$ be a left-invariant closed subspace of $L^2(N)$. A complete characterization of left-invariant closed subspaces of $L^2(G)$ where $G$ is a locally compact type I group is well-known and available in the literature. Referring to corollary $4.17$ in the monograph $\cite{Fuhr cont}$,  $\mathcal{P}\left(  \mathcal{K}\right)
=\int_{\mathbf{\Sigma}}^{\oplus} L^{2}\left(\mathbb{R}^{d}\right)  \otimes P_{\lambda}\left( L^{2}\left(\mathbb{R}^{d}\right)  \right) d\mu\left(  \lambda\right)$, where $P_{\lambda}$ is a measurable field of projections onto $L^2(\mathbb{R}^d)$.  We define the
multiplicity function by $m:\Sigma \rightarrow \mathbb{N}\cup\{0,\infty\}$ and $m\left(  \lambda\right)  =\mathrm{rank}\left(  P_{\lambda
}\right).$ We observe that there is a natural isometric isomorphism between
$\mathcal{P}\left(  \mathcal{K}\right)  $ and $\int_{\Sigma}^{\oplus}%
L^{2}\left(\mathbb{R}^{d}\right)  \otimes\mathbb{C}^{m\left(  \lambda\right)  }d\mu\left(  \lambda\right).$ 

\begin{proposition}\label{multspace}
If there exits some function $\phi\in\mathcal{K}$ such that $\left\{  L\left(
\gamma_{a,q,b}\right)  \phi\right\}  _{\gamma_{a,q,b}}$ forms  an Parseval frame, then for almost $\lambda\in\mathbf{I}$, $|\det B(\lambda)m(\lambda)|\leq \prod_{i=1}^d (b_i q_i).$
\end{proposition}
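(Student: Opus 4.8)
The plan is to follow exactly the same route as in Proposition~\ref{main prop}, but now tracking the multiplicity. First I would unfold the Parseval frame condition for $\{L(\gamma_{a,q,b})\phi\}$ against an arbitrary $\psi\in\mathcal{K}$ using the Plancherel transform, writing $\sum_{\gamma_{a,q,b}}|\langle\psi,L(\gamma_{a,q,b})\phi\rangle_{L^2(N)}|^2 = \sum_{\gamma_{a,q,b}}\bigl|\int_{\mathbf{I}}\langle\widehat\psi(\lambda),\pi_\lambda(\gamma_{a,q,b})\otimes\mathbf{1}\,\widehat\phi(\lambda)\rangle_{\mathcal{HS}}\,d\mu(\lambda)\bigr|^2$, where now $\widehat\psi(\lambda),\widehat\phi(\lambda)\in L^2(\mathbb{R}^d)\otimes\mathbb{C}^{m(\lambda)}$. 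Splitting $\gamma_{a,q,b}$ into its central part (the $m_j/a_j$ translations) and the reduced part $\gamma_{q,b}$, and using that $\{\prod_{k=1}^{n-2d}e^{2\pi i\lambda_k m_k/a_k}/\sqrt{a_k}\}$ is a Parseval frame for $L^2(\mathbf{I})$, I would collapse the sum over the central indices to an integral, exactly as in the chain of displayed equalities in the proof of Proposition~\ref{main prop}, arriving at
\[
\|\psi\|_{L^2(N)}^2 = \int_{\mathbf{I}}\sum_{\gamma_{q,b}}\Bigl|\Bigl\langle\widehat\psi(\lambda),\ \mathbf{a}\,|\det B(\lambda)|^{1/2}\,\bigl(\pi_\lambda(\gamma_{q,b})\otimes\mathbf{1}_{\mathbb{C}^{m(\lambda)}}\bigr)\widehat\phi(\lambda)\Bigr\rangle_{\mathcal{HS}}\Bigr|^2 d\mu(\lambda),
\]
with $\mathbf{a}=\prod_{k=1}^{n-2d}\sqrt{a_k}$. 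Comparing with $\|\psi\|_{L^2(N)}^2=\int_{\mathbf{I}}\|\widehat\psi(\lambda)\|_{\mathcal{HS}}^2 d\mu(\lambda)$ and running the same density/countable-dense-subset argument, I conclude that for $\mu$-a.e.\ $\lambda\in\mathbf{I}$ the system $\{\mathbf{a}\,|\det B(\lambda)|^{1/2}(\pi_\lambda(\gamma_{q,b})\otimes\mathbf{1}_{\mathbb{C}^{m(\lambda)}})\widehat\phi(\lambda):\gamma_{q,b}\in\Gamma_{q,b}\}$ is a Parseval frame for $L^2(\mathbb{R}^d)\otimes\mathbb{C}^{m(\lambda)}$.

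The second step is to extract the density inequality from this fibre-wise Parseval frame condition. By Lemma~\ref{gaborsystem}, $\{\pi_\lambda(\gamma_{q,b})v:\gamma_{q,b}\}$ for a single $v\in L^2(\mathbb{R}^d)$ is a multivariate Gabor system $\mathcal{G}(v,\Lambda(\lambda))$ with $\Lambda(\lambda)=A(b)\mathbb{Z}^d\times B(\lambda)D(q)\mathbb{Z}^d$; tensoring with the identity on $\mathbb{C}^{m(\lambda)}$ one gets an $m(\lambda)$-fold "vector-valued" (superframe) Gabor system in $L^2(\mathbb{R}^d)\otimes\mathbb{C}^{m(\lambda)}\cong L^2(\mathbb{R}^d,\mathbb{C}^{m(\lambda)})$. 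The existence of a single generator $\widehat\phi(\lambda)$ making this a Parseval frame forces the lattice to be "dense enough" for $m(\lambda)$ copies: the necessary volume condition becomes $m(\lambda)\,\mathrm{Vol}(\Lambda(\lambda))\leq 1$. Since $\mathrm{Vol}(\Lambda(\lambda))=|\det A(b)\det B(\lambda)\det D(q)| = |\det B(\lambda)|/(b_1\cdots b_d\,q_1\cdots q_d)$, this is precisely $|\det B(\lambda)\,m(\lambda)|\leq \prod_{i=1}^d(b_iq_i)$, the claimed inequality.

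The main obstacle is the step "$m(\lambda)$-fold superframe $\Rightarrow$ $m(\lambda)\,\mathrm{Vol}(\Lambda(\lambda))\leq 1$": the paper has only quoted the scalar density condition (Lemma~\ref{density}, from \cite{Han Yang Wang}), not its vector-valued/superframe analogue. I would handle this by invoking the known generalization of the Gabor density theorem to vector-valued windows (e.g.\ the comparison-of-traces / Rieffel-coupling-constant argument: a Parseval frame $\{(\pi_\lambda(\gamma_{q,b})\otimes\mathbf{1})\widehat\phi(\lambda)\}$ gives $\dim(L^2(\mathbb{R}^d)\otimes\mathbb{C}^{m(\lambda)}) = m(\lambda)\dim L^2(\mathbb{R}^d)$ "fits" into a single orbit of the lattice, which by the trace computation of the commutant von Neumann algebra costs $m(\lambda)\,\mathrm{Vol}(\Lambda(\lambda))\le 1$). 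Alternatively, and more elementarily, one can note that a single-generator Parseval frame for the $m$-fold direct sum yields, by restricting to each coordinate, $m$ disjoint Parseval frames of translates-modulations with the same lattice whose window norms sum to $\mathrm{Vol}(\Lambda(\lambda))$ (Lemma~\ref{ONB}); orthogonality of the $m$ pieces together with the $\|g(\lambda)\|^2=\mathrm{Vol}(\Lambda(\lambda))$ identity from Lemma~\ref{ONB} then gives the factor $m(\lambda)$ in the volume bound. Once that fibre-wise inequality is in hand, the result follows immediately.
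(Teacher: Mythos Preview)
Your proposal is correct and follows essentially the same route as the paper: reduce to a fibre-wise Parseval superframe via the computations of Proposition~\ref{main prop}, then invoke the superframe density condition $m(\lambda)\,\mathrm{Vol}(\Lambda(\lambda))\le 1$. The only difference is in how that last step is justified: where you flag it as an obstacle and sketch a von Neumann trace argument or an elementary orthogonality argument, the paper simply cites the superframe density theorem of Gr\"ochenig--Lyubarskii (Proposition~2.6 in \cite{Grog}), which gives exactly $|\det B(\lambda)\det A(b)\det D(q)|\le 1/m(\lambda)$.
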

\begin{proof}
Recall that $$\mathbf{a}={\displaystyle\prod\limits_{k=1}^{n-2d}}\sqrt{a_{k}}.$$ By assumption, given any function $f\in\mathcal{H}$, $\sum_{\gamma_{a,q,b}%
}\left\vert \left\langle f,L\left(  \gamma_{a,q,b}\right)  \phi\right\rangle
\right\vert ^{2}=\left\Vert f\right\Vert ^{2}.$  We have $\widehat{f}(\lambda)=\Sigma_{k=1}^{m(\lambda) }u_{f}^k(\lambda)\otimes e^k(\lambda)$ and similarly, $\widehat{\phi}(\lambda)=\Sigma_{k=1}^{m(\lambda) }u_{\phi}^k(\lambda)\otimes e^k(\lambda)$ such that $u_{f}^k(\lambda),u_{\phi}^k(\lambda)$, $e^k(\lambda)$ $\in\L^2(\mathbb{R}^d)$, and $||e^k(\lambda)||=1$ for a.e. $\lambda\in\mathbf{I}$. Next, we identify $L^2(\mathbb{R}^d)\otimes\mathbb{C}^{m(\lambda)}$ with $\bigoplus_{k=1}^{m(\lambda)} L^2(\mathbb{R}^d)$ in a natural way almost everywhere. For example under such identification, $\Sigma_{k=1}^{m(\lambda) }u_{f}^k(\lambda)\otimes e^k(\lambda)$ is identified with $(u_{f}^1,\cdots, u_{f}^{m(\lambda)})$. Thus, a.e. by following similar steps as seen in the proof of Proposition \ref{main prop}, the system 
$$
\left\{  \mathbf{a}\sqrt{\left\vert \det B\left(  \lambda\right)  \right\vert
}\pi_{\lambda}\left(  \gamma_{q,b}\right)  \widehat{\phi}\left(
\lambda\right)  \right\}  _{\gamma_{q,b}}
$$
forms a Parseval vector-valued Gabor frame also called Parseval superframe for almost every $\lambda\in\mathbf{I}$ in $L^2(\mathbb{R}^d)\otimes\mathbb{C}^{m(\lambda)} $. Since we have a measurable field of Gabor systems,
using the density theorem of super-frames (Proposition 2.6. \cite{Grog}), up to a set of measure zero, we
have $\left\vert \det B\left(  \lambda\right)  \det A\left(  b\right)  \det
D\left(  q\right)  \right\vert \leq\frac{1}{m\left(  \lambda\right)  }$ and, $|\det B(\lambda)m(\lambda)|\leq \prod_{i=1}^d (b_i q_i)$.\end{proof}

The following proposition gives some conditions which allow us to provide some answers to Question $3$.
\begin{proposition}\label{corr}
Let $\mathcal{K}$ be a band-limited subspace of $L^{2}\left(  N\right)  $ such
that
\[
\mathcal{P}\left(  \mathcal{K}\right)  =\int_{\mathbf{I}}^{\oplus}
L^{2}\left(\mathbb{R}^{d}\right)  \otimes\mathbb{C}^{m\left(  \lambda\right)  }d\mu\left(  \lambda\right).
\]
If $\phi \in\mathcal{K}$ is a continuous wavelet such that $\{L(\gamma_{a,q,b}) \phi\}$ forms a Parseval frame, then $
m\left(  \lambda\right)  \leq\frac{1}{\mathbf{a}^{2}\left\vert \det B\left(
\lambda\right)  \right\vert }\text{ a.e. and }\left\Vert \phi\right\Vert ^{2}
\leq\int_{\mathbf{I}}\left(  b_{1}\cdots b_{d}\text{ }q_{1}\cdots q_{d}\right)
d\lambda.$
\end{proposition}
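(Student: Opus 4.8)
The plan is to reduce the statement, exactly as in the proof of Proposition~\ref{main prop} and Proposition~\ref{multspace}, to a pointwise (in $\lambda$) statement about Gabor superframes, and then to combine the density condition with the admissibility hypothesis to extract the two claimed inequalities. First I would unwind the Plancherel transform: since $\phi \in \mathcal{K}$ with $\mathcal{P}(\mathcal{K})=\int_{\mathbf{I}}^{\oplus} L^2(\mathbb{R}^d)\otimes\mathbb{C}^{m(\lambda)}\,d\mu(\lambda)$, and $\{L(\gamma_{a,q,b})\phi\}$ is a Parseval frame, the argument in Proposition~\ref{main prop} (the manipulation of the central characters $e^{2\pi i \lambda_k m_k/a_k}$ against the Parseval frame $\{\prod_k e^{2\pi i \langle m_k,\cdot\rangle}/\sqrt{a_k}\}$ on $L^2(\mathbf{I})$) shows that for $\mu$-a.e.\ $\lambda\in\mathbf{I}$ the system $\{\mathbf{a}\sqrt{|\det B(\lambda)|}\,\pi_\lambda(\gamma_{q,b})\widehat{\phi}(\lambda)\}_{\gamma_{q,b}}$ is a Parseval superframe for $L^2(\mathbb{R}^d)\otimes\mathbb{C}^{m(\lambda)}$, where $\mathbf{a}=\prod_{k=1}^{n-2d}\sqrt{a_k}$.

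Next I would apply the density theorem for superframes (Proposition~2.6 in \cite{Grog}), exactly as invoked in Proposition~\ref{multspace}: a Parseval superframe of multiplicity $m(\lambda)$ built from the lattice $A(b)\mathbb{Z}^d\times B(\lambda)D(q)\mathbb{Z}^d$ forces $|\det B(\lambda)\det A(b)\det D(q)|\le \frac{1}{m(\lambda)}$, i.e.\ $m(\lambda)|\det B(\lambda)|\le b_1\cdots b_d\, q_1\cdots q_d$. To get the stated bound $m(\lambda)\le \frac{1}{\mathbf{a}^2|\det B(\lambda)|}$ I would instead compute, as in Lemma~\ref{ONB}, the Hilbert--Schmidt norm of the generator: each Parseval superframe generator has squared norm equal to the volume of the underlying lattice times $m(\lambda)$, so $\|\mathbf{a}\sqrt{|\det B(\lambda)|}\,\widehat{\phi}(\lambda)\|_{\mathcal{HS}}^2 = m(\lambda)\,\mathrm{vol}(\Lambda(\lambda))\cdot(\text{something})$; more directly, the admissibility of $\phi$ (continuous wavelet, so $\mathcal{W}_\phi$ is an isometry on $\mathcal{K}$) together with Lemma~\ref{cont} forces $\|\widehat{\phi}(\lambda)\|_{\mathcal{HS}}^2 = m(\lambda)$ for a.e.\ $\lambda$. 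Comparing this with the Parseval-superframe normalization $\|\mathbf{a}\sqrt{|\det B(\lambda)|}\,\widehat{\phi}(\lambda)\|_{\mathcal{HS}}^2 = m(\lambda)$, which itself must be $\le 1$ fiberwise (a Parseval frame generator has norm at most $1$), yields $\mathbf{a}^2|\det B(\lambda)|\,m(\lambda)\le 1$, that is, $m(\lambda)\le \frac{1}{\mathbf{a}^2|\det B(\lambda)|}$ a.e.

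Finally, for the norm bound on $\phi$, I would integrate: $\|\phi\|^2 = \int_{\mathbf{I}}\|\widehat{\phi}(\lambda)\|_{\mathcal{HS}}^2\,d\mu(\lambda) = \int_{\mathbf{I}} m(\lambda)\,d\mu(\lambda)$ using $d\mu(\lambda)=|\det B(\lambda)|\,d\lambda$, so $\|\phi\|^2 = \int_{\mathbf{I}} m(\lambda)|\det B(\lambda)|\,d\lambda$, and then substitute the superframe density inequality $m(\lambda)|\det B(\lambda)|\le b_1\cdots b_d\,q_1\cdots q_d$ to conclude $\|\phi\|^2 \le \int_{\mathbf{I}}(b_1\cdots b_d\,q_1\cdots q_d)\,d\lambda$. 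The main obstacle, and the place requiring the most care, is the bookkeeping in passing from the genuine Parseval-frame identity on $L^2(N)$ down to the fiberwise superframe statement on a dense subset: one must choose a countable dense set $Q\subset\mathcal{K}$ whose fiber images are dense a.e.\ (as done in Proposition~\ref{main prop}) so that the pointwise identity holds off a single null set, and one must be careful that the identification $L^2(\mathbb{R}^d)\otimes\mathbb{C}^{m(\lambda)}\cong\bigoplus_{k=1}^{m(\lambda)}L^2(\mathbb{R}^d)$ is measurable in $\lambda$ so that the density theorem for superframes applies fiberwise; everything else is a direct combination of Lemma~\ref{cont}, Lemma~\ref{ONB}, and Proposition~2.6 of \cite{Grog}.
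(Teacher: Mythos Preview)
Your proposal is correct and follows essentially the same route as the paper's own proof: reduce via the Plancherel transform and the central-character Parseval frame to a fiberwise Parseval superframe statement (as in Proposition~\ref{multspace}), then combine the admissibility identity $\|\widehat{\phi}(\lambda)\|_{\mathcal{HS}}^2 = m(\lambda)$ from Lemma~\ref{cont} with the elementary bound $\|\text{Parseval generator}\|^2\le 1$ to get $m(\lambda)\le 1/(\mathbf{a}^2|\det B(\lambda)|)$, and combine the superframe density condition $m(\lambda)|\det B(\lambda)|\le b_1\cdots b_d\,q_1\cdots q_d$ with $\|\phi\|^2=\int_{\mathbf{I}} m(\lambda)|\det B(\lambda)|\,d\lambda$ to get the norm bound. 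Your aside about the squared norm of the superframe generator equaling ``volume times $m(\lambda)$'' is unnecessary and slightly muddled, but you correctly discard it for the direct argument; the extra care you flag about the countable dense set and measurable identification is appropriate but already handled in Proposition~\ref{main prop} and Proposition~\ref{multspace}, which the paper simply cites implicitly.
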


\begin{proof}
Assume there exists a function $\phi$ which is a continuous wavelet such that
$\left\{  L\left(  \gamma_{a,q,b}\right)  \phi\right\}  _{\gamma_{a,q,b}}$
forms a Parseval frame. The system $$\left\{  \mathbf{a}\left\vert \det B\left(  \lambda\right)  \right\vert
^{1/2}\pi_{\lambda}\left(  \gamma_{q,b}\right)  \widehat{\phi}\left(
\lambda\right)  \right\}  _{\gamma_{q,b}}$$ forms a Parseval frame for a.e $\lambda\in\mathbf{I}$ for the space
$L^{2}\left(\mathbb{R}^{d}\right)  \otimes\mathbb{C}^{m\left(  \lambda\right)  }.$ Thus, we have $\left\Vert \mathbf{a}\left\vert \det
B\left(  \lambda\right)  \right\vert ^{1/2}\widehat{\phi}\left(
\lambda\right)  \right\Vert ^{2}\leq1$, and
$$\left\Vert \widehat{\phi}\left(  \lambda\right)  \right\Vert ^{2}=m\left(
\lambda\right)  \leq\frac{1}{\mathbf{a}^{2}\left\vert \det B\left(
\lambda\right)  \right\vert }$$ by the admissibility of $\phi$ and Lemma \ref{ONB}. By the density condition of Gabor superframes (see Proposition 2.6 in \cite{Grog}), $\left\vert \det B\left(
\lambda\right)  \det A\left(  b\right)  \det D\left(  q\right)  \right\vert
\leq\frac{1}{m\left(  \lambda\right)  }$ a.e. Furthermore, because $\phi$ is a
continuous wavelet
\[
\left\Vert \widehat{\phi}\left(  \lambda\right)  \right\Vert ^{2}=m\left(
\lambda\right)  \leq\frac{1}{\left\vert \det B\left(  \lambda\right)  \det
A\left(  b\right)  \det D\left(  q\right)  \right\vert }.
\]
As a result,
\begin{align*}
\left\Vert \phi\right\Vert ^{2}  & =\int_{\mathbf{I}}\left\Vert \widehat{\phi
}\left(  \lambda\right)  \right\Vert ^{2}\left\vert \det B\left(
\lambda\right)  \right\vert d\lambda\\
& =\int_{\mathbf{I}}m\left(  \lambda\right)  \left\vert \det B\left(
\lambda\right)  \right\vert d\lambda\\
& \leq\int_{\mathbf{I}}\frac{d\lambda}{\left\vert \det A\left(  b\right)  \det
D\left(  q\right)  \right\vert }\\
& =\int_{\mathbf{I}}\left(  b_{1}\cdots b_{d}q_{1}\cdots q_{d}\right)
d\lambda.
\end{align*}
\end{proof}

\begin{theorem}\label{disc}
Let $\mathcal{H}$ be a multiplicity-free band-limited subspace of
$L^{2}\left(  N\right)  $ such that $\mathcal{P}\left(  \mathcal{H}\right)
=\int_{\mathbf{S}}^{\oplus}\left(  L^{2}\left(\mathbb{R}^{d}\right)  \otimes\mathbf{u}\right)  $ $d\mu\left(  \lambda\right)  $ and
\[
\mathbf{S}=\left\{  \lambda\in\mathbf{I} :\frac{\left\vert \det B\left(
\lambda\right)  \right\vert }{b_{1}\cdots b_{d}q_{1}\cdots q_{d}}%
\leq1\right\}
\]
with the following additional restriction on the quasi-lattice $\Gamma
_{a,q,b},$  $$b_{1}\cdots b_{d}q_{1}\cdots q_{d}a_{1}\cdots a_{n-2d}=1.$$ 
$\mathcal{H}$ admits a continuous wavelet $\phi$ which is discretizable by
$\Gamma_{a,q,b}$ in the sense that the operator $D_{\phi}%
:\mathcal{H\rightarrow} \:l^{2}\left(  \Gamma_{a,q,b}\right)  $ defined by
\[
D_{\phi}\psi\left(  \gamma_{a,q,b}\right)  =\left\langle \psi,L\left(
\gamma_{a,q,b}\right)  \phi\right\rangle
\]
is an isometric embedding of $\mathcal{H}$ into $l^{2}\left(  \Gamma
_{a,q,b}\right)  .$ Additionally, the discretized continuous wavelet generates
an orthonormal basis if $\mu\left(  \mathbf{S}\right)  =1.$
\end{theorem}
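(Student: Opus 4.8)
The plan is to exhibit a single vector $\phi\in\mathcal H$ that simultaneously generates a Parseval frame under $L(\Gamma_{a,q,b})$ and is an admissible vector for $(L|\mathcal H,\mathcal H)$. Then the discretization operator $D_\phi$ is nothing but the analysis operator of that Parseval frame, hence an isometric embedding, while the isometry of $\mathcal W_\phi$ is precisely the assertion that $\phi$ is a continuous wavelet; so both halves of the statement are obtained at once, and the normalization $b_1\cdots b_d\,q_1\cdots q_d\,a_1\cdots a_{n-2d}=1$ is exactly what reconciles the discrete and continuous pictures.

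First I would observe that the set $\mathbf S$ is designed so that the Gabor density condition holds on it: for $\lambda\in\mathbf S$ one has, recalling $\Lambda(\lambda)$ from Lemma \ref{gaborsystem},
\[
\mathrm{Vol}(\Lambda(\lambda))=\left|\det A(b)\det B(\lambda)\det D(q)\right|=\frac{|\det B(\lambda)|}{b_1\cdots b_d\,q_1\cdots q_d}\le 1 ,
\]
so Lemma \ref{density} yields, for $\mu$-a.e. $\lambda\in\mathbf S$, a function $g(\lambda)\in L^2(\mathbb R^d)$ with $\mathcal G\!\left(g(\lambda),A(b)\mathbb Z^d\times B(\lambda)D(q)\mathbb Z^d\right)$ a Parseval frame, chosen measurably in $\lambda$ as in the earlier lemmas. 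I then define $\phi\in\mathcal H$ by the formula of (\ref{pf}), now over $\mathbf S$ rather than over $\mathbf I$ and with the prescribed $b$ in place of $b(\mathbf s)$:
\[
\widehat\phi(\lambda)=\frac{g(\lambda)\otimes\mathbf u}{\prod_{k=1}^{n-2d}\sqrt{a_k}\,\sqrt{|\det B(\lambda)|}},\qquad \lambda\in\mathbf S .
\]

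Next I run the computation in the proof of Proposition \ref{NTF prop} verbatim, with $\mathbf I$ replaced by $\mathbf S$ and $b(\mathbf s)$ replaced by $b$: every step there is pointwise in $\lambda$ and uses only that $\mathcal G(g(\lambda),\cdot)$ is a Parseval frame, so one obtains $\sum_{\gamma_{a,q,b}}\left|\langle\psi,L(\gamma_{a,q,b})\phi\rangle\right|^2=\|\psi\|^2$ for every $\psi\in\mathcal H$, i.e. $\|D_\phi\psi\|_{\ell^2(\Gamma_{a,q,b})}^2=\|\psi\|^2$, so $D_\phi$ is an isometric embedding. For admissibility I compute the fiber norm: by Lemma \ref{ONB}, $\|g(\lambda)\|^2=\mathrm{Vol}(\Lambda(\lambda))=|\det A(b)\det B(\lambda)\det D(q)|$, whence
\[
\|\widehat\phi(\lambda)\|_{\mathcal{HS}}^2=\frac{\|g(\lambda)\|^2}{\prod_{k=1}^{n-2d}a_k\,|\det B(\lambda)|}=\frac{1}{b_1\cdots b_d\,q_1\cdots q_d\,a_1\cdots a_{n-2d}}=1
\]
by the normalization hypothesis. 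Since $\mathcal H$ is multiplicity-free, $m(\lambda)=1$ on $\mathbf S$, so $\|\widehat\phi(\lambda)\|_{\mathcal{HS}}^2=m(\lambda)$ a.e.; Theorem 4.22 of \cite{Fuhr cont} then gives that $\phi$ is an admissible vector, i.e. $\mathcal W_\phi$ is an isometry (consistently, $\|\phi\|^2=\int_{\mathbf S}\|\widehat\phi(\lambda)\|_{\mathcal{HS}}^2\,d\mu(\lambda)=\mu(\mathbf S)$, matching Lemma \ref{cont}). Finally, if $\mu(\mathbf S)=1$ then $\|\phi\|^2=1$, and since $L$ is unitary each $L(\gamma_{a,q,b})\phi$ is a unit vector; a Parseval frame of unit vectors is an orthonormal basis, so $\{L(\gamma_{a,q,b})\phi\}$ is an ONB of $\mathcal H$ and $D_\phi$ is onto.

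The only delicate points are the measurable selection of $\lambda\mapsto g(\lambda)$ (already implicit in the preceding lemmas) and the invocation of Theorem 4.22 of \cite{Fuhr cont} for the \emph{sufficiency} of the pointwise condition $\|\widehat\phi(\lambda)\|_{\mathcal{HS}}=m(\lambda)$ for admissibility, since Lemma \ref{cont} as stated records only the necessary direction. Everything else is a direct assembly of Lemma \ref{density}, Lemma \ref{ONB} and the argument of Proposition \ref{NTF prop}; the role of the hypotheses is simply that $\mathbf S$ is precisely the region where the Gabor Parseval frame exists, while the product normalization forces the admissibility norm to coincide with the multiplicity.
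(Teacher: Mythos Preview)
Your proof is correct and follows essentially the same approach as the paper: define $\widehat\phi(\lambda)$ via the Gabor Parseval-frame generator $g(\lambda)$ (whose existence on $\mathbf S$ is guaranteed by the density condition), invoke the computation of Proposition~\ref{NTF prop} to get the Parseval frame property, and then check $\|\widehat\phi(\lambda)\|_{\mathcal{HS}}^2=1$ via Lemma~\ref{ONB} and the product normalization to obtain admissibility. You are in fact a bit more careful than the paper in flagging the measurable selection of $g(\lambda)$ and in noting that Lemma~\ref{cont} records only the necessary direction while the sufficiency comes from Theorem~4.22 of \cite{Fuhr cont}; in the multiplicity-free case your pointwise condition $\|\widehat\phi(\lambda)\|_{\mathcal{HS}}^2=1=m(\lambda)$ is indeed equivalent to the operator condition $\widehat\phi(\lambda)^*\widehat\phi(\lambda)=P_\lambda$ required there.
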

\begin{proof}
First, we start by defining a function $\phi$ such that $\mathcal{P}\left(
\phi\right)  \left(  \lambda\right)  =u_{\phi}\left(  \lambda\right)
\otimes\mathbf{u}$ for almost every $\lambda\in\mathbf{S}$. If we want to
construct $\phi$ such that $L\left(  \gamma_{a,q,b}\right)  \phi$ is a
Parseval frame for $\mathcal{H}$, it suffices to pick $u_{\phi}\left(
\lambda\right)  $ such that for a.e. $\lambda\in\mathbf{S},$%
\[
u_{\phi}\left(  \lambda\right)  =\frac{g\left(  \lambda\right)  }{\left(
a_{1}\cdots a_{n-2d}\left\vert \det B\left(  \lambda\right)  \right\vert
\right)  ^{1/2}}%
\]
and the Gabor system 
$
\mathcal{G}\left(  g\left(  \lambda\right)  ,A\left(  b\right)\mathbb{Z}^{d}\times B\left(  \lambda\right)  D\left(  q\right)\mathbb{Z}^{d}\right)$
generates a Parseval frame in $L^{2}\left(\mathbb{R}^{d}\right)  $. Since $$\dfrac{\left\vert \det B\left(  \lambda\right)
\right\vert }{b_{1}\cdots b_{d}q_{1}\cdots q_{d}}\leq1,$$ the density
condition is met almost everywhere and the existence of the measurable field
of functions $g\left(  \lambda\right)  $ generating Parseval frames is guaranteed by Lemma \ref{density}. To ensure that $\phi$ is a continuous wavelet, then we need to
check that for almost $\lambda\in\mathbf{S,}$ $\left\Vert u_{\phi}\left(
\lambda\right)  \right\Vert ^{2}=1.$ With some elementary computations, we
have%
\begin{align*}
\left\Vert u_{\phi}\left(  \lambda\right)  \right\Vert ^{2}  & =\frac
{\left\vert \det B\left(  \lambda\right)  \right\vert }{b_{1}\cdots b_{d}%
q_{1}\cdots q_{d}a_{1}\cdots a_{n-2d}\left\vert \det B\left(  \lambda\right)
\right\vert }\\
& =\frac{1}{b_{1}\cdots b_{d}q_{1}\cdots q_{d}a_{1}\cdots a_{n-2d}}\\
& =1.
\end{align*}
Finally, if $\phi$ is an orthonormal basis, then $
\left\Vert \phi\right\Vert ^{2}=\mu\left(  \mathbf{S}\right)  =1.$  This completes the proof. 
\end{proof}

\section{Examples}
\begin{example}
We consider the Heisenberg group realized as $N=P\rtimes M$ where $P=\exp\mathbb{R}Z\exp\mathbb{R}Y$ and $M=\exp\mathbb{R}
X$ with the following non-trivial Lie brackets: $\left[  X,Y\right]  =Z.$\end{example}
We have $\mathcal{P}\left(  L^{2}\left(  N\right)  \right)  =\int_{\mathbb{R}^{\ast}}^{\oplus}L^{2}\left(\mathbb{R}\right)  \otimes L^{2}\left(\mathbb{R}
\right)  \left\vert \lambda\right\vert d\lambda$. Consider for nonzero positive real
numbers $a,q,b$ the quasi-lattice $$\Gamma_{a,q,b}=\exp\left(  \frac{1}{a}\mathbb{Z}\right)  Z\exp\left(  \frac{1}{q}\mathbb{Z}\right)  Y\exp\left(  \frac{1}{b}\mathbb{Z}\right)X,$$ and the reduced quasi-lattice $\Gamma_{q,b}=\exp\left(  \frac
{1}{q}\mathbb{Z}\right)  Y\exp\left(  \frac{1}{b}\mathbb{Z}\right)  X.$ Let $$\mathcal{H}\left(  a\right)  =\mathcal{P}^{-1}\left(
\int_{\left( 0,a\right]}^{\oplus}L^{2}\left(\mathbb{R}\right)  \otimes\chi_{\left(  0,1\right]  }  \left\vert\lambda\right\vert d\lambda\right)  $$ be a left-invariant multiplicity-free
subspace of $L^{2}\left(  N\right)  $. Now put $b=a$ and choose $q$ such that
$1/q\leq1.$ By the density condition, there exists for each $\lambda
\in\left( 0,a\right]$ a function $g\left(  \lambda\right)  $ such that the Gabor system $\mathcal{G}\left(  g\left(  \lambda\right)  ,\frac{1}{a}\mathbb{Z}\times\frac{\left\vert \lambda\right\vert }{q}\mathbb{Z}\right)  $ forms a Parseval frame. For each $\lambda$ fix such function $g(\lambda),$ and let $\eta\in\mathcal{H}
\left(  a\right)  $ such that $$\left(  \mathcal{P}\eta\right)  \left(
\lambda\right)  =\frac{g\left(  \lambda\right)  }{\sqrt{a\left\vert
\lambda\right\vert }}\otimes\chi_{\left[  0,1\right]  }.$$
It follows that as long as $q$ is chosen such that $1/q\leq1,$ $L\left(
\Gamma_{a,q,a}\right)  \eta$ forms a Parseval frame for $\mathcal{H}(a)$. If we want to form an orthonormal basis generated by $\eta$, according to (\ref{basis}) we will need to pick $q$ such that $q=1/2$. However, this gives a contradiction, since $1/q=2>1$. Thus, there is no orthonormal basis of the form  $L\left(
\Gamma_{a,q,a}\right)  \eta$.
\begin{example} Let $N$ be a nilpotent Lie group with Lie algebra spanned by the basis
$\left\{  Z_{1},Z_{2},Y_{1},Y_{2},X_{1},X_{2}\right\}  $ with the following
non-trivial Lie brackets
$\left[  X_{1},Y_{1}\right] =Z_{1}$, $\left[  X_{2},Y_{2}\right]  =Z_{1}$, 
$\left[  X_{1},Y_{2}\right] =\left[  X_{2},Y_{1}\right]  =Z_{2}.$\end{example}
Let $\mathcal{H}$ be a left-invariant closed subspace of $L^{2}\left(  N\right)$, $$\mathbf{I}=\{(\lambda_1,\lambda_2,0,\cdots,0)\in\mathbb{R}^6:\vert \lambda_1^2-\lambda_2^2\vert\neq 0,0\leq \lambda_1\leq 2,0\leq \lambda_2 \leq 3 \},$$ with Plancherel measure $d\mu(\lambda_1,\lambda_2)=\vert \lambda_1^2-\lambda_2^2\vert d\lambda_1 d\lambda_2$ and
$$\mathcal{P}\left(  \mathcal{H}\right)  =\int_{\mathbf{I}}^{\oplus}\left(
L^{2}\left(\mathbb{R}^{2}\right)  \otimes\chi_{[0,1]^2}\right)  d\mu(\lambda_1,\lambda_2).$$ Since $\mathbf{s} =9,$ we
define the quasi-lattice,
\[
\Gamma_{\left(  2,3\right)  ,\left(  1,1\right)  ,\left(  3,3\right)
}= \exp\frac{\mathbb{Z}}{2}Z_{1}\exp\frac{\mathbb{Z}}{3}Z_{1}\exp
\mathbb{Z}Y_{1}\exp\mathbb{Z}Y_{2}\exp\frac{\mathbb{Z}}{3}X_{1}\exp\frac{\mathbb{Z}}{3}X_{2}  .
\]
Thus, there exists a function $\phi\in\mathcal{H}$ such that $L\left(
\Gamma_{\left(  2,3\right)  ,\left(  1,1\right)  ,\left(  3,3\right)
}\right)  \phi$ forms a Parseval frame. However, since $\mu\left(  \left[
0,2\right]  \times\left[  0,3\right]  \right)  =46/3\neq54,$ by (\ref{basis}) there is no orthonormal basis of the type $L\left(  \Gamma_{\left(
2,3\right)  ,\left(  1,1\right)  ,\left(  3,3\right)  }\right)  \phi$. In fact the norm of the vector $\phi$ can be computed to be precisely $(23/81)^{1/2}$. Since the multiplicity condition in Proposition \ref{corr} fails in this situation, there is no continuous wavelet which is discretizable by the lattice $\Gamma_{\left(  2,3\right)  ,\left(  1,1\right)  ,\left(  3,3\right).
}$
\begin{example} Let $N$ be a $9$ dimensional nilpotent Lie group with Lie algebra spanned by the basis $\{Z_i,Y_j,Y_k\}_{1\leq i,j,k\leq3}$ with the following non-trivial Lie brackets. $[Y_1,X_1]=[Y_3,X_2]=[Y_2,X_3]=Z_1$, $[Y_2,X_1]=[Y_1,X_2]=[Y_3,X_3]=Z_2$, and $[Y_3,X_1]=[Y_2,X_2]=[Y_1,X_3]=Z_3.$ \end{example} The Plancherel measure is $$ d\mu(\lambda_1,\lambda_2,\lambda_3)=|-\lambda_1^3-\lambda_2^3+\lambda_1\lambda_2\lambda_3-\lambda_3^3|d\lambda_1d\lambda_2d\lambda_3.$$ Assume that $\mathcal{H}$ is a multiplicity-free subspace of $L^2(N)$ with spectrum $\mathbf{S}=\{(\lambda_1,\lambda_2,\lambda_3,0,\cdots,0)\in\mathbb{R}^9: |-\lambda_1^3-\lambda_2^3+\lambda_1\lambda_2\lambda_3-\lambda_3^3|\leq 1, |-\lambda_1^3-\lambda_2^3+\lambda_1\lambda_2\lambda_3-\lambda_3^3|\neq 0\}\cap\mathbf{I},$  and $$\mathbf{I}=\{(\lambda_1,\lambda_2,\lambda_3,0,\cdots,0)\in\mathbb{R}^9:0\leq \lambda_i\leq 1,|-\lambda_1^3-\lambda_2^3+\lambda_1\lambda_2\lambda_3-\lambda_3^3|\neq 0 \}.$$ Put $a=b=q=(1,1,1).$ By Theorem \ref{disc}, the space $\mathcal{H}$ admits a continuous wavelet which is discretizable by $\Gamma_{a,b,q}$. 

\begin{center} \textbf{Acknowledgment}\end{center}
Sincerest thanks go to the referee for a very careful reading, and for many crucial, and helpful comments. His remarks were essential to the improvement of this paper.

\end{document}